\newenvironment{enumerate*}%
  {\begin{enumerate}[(I)]%
    \setlength{\itemsep}{10pt}%
    \setlength{\parskip}{0pt}}%
  {\end{enumerate}}
\newtheorem{theorem}{Theorem}[section]
\newtheorem{proposition}[theorem]{Proposition}
\newtheorem{corollary}[theorem]{Corollary}
\newtheorem{conjecture}[theorem]{Conjecture}
\newtheorem{lemma}[theorem]{Lemma}
\theoremstyle{definition}
\newtheorem{remark}[theorem]{Remark}
\newtheorem{example}[theorem]{Example}
\newcommand{\dfn}[1]{\textcolor{blue}{\emph{#1}}}
\newcommand{\SortNoop}[1]{}
\DeclareMathOperator{\SRT}{RTab}
\DeclareMathOperator{\cont}{cont}
\DeclareMathOperator{\TYT}{YT}
\DeclareMathOperator{\SW}{\mathsf{Ab}}
\DeclareMathOperator{\Compress}{\mathsf{Compress}}
\begin{document}

\title{Tilings of Benzels via the Abacus Bijection}
\subjclass[2010]{}

\author[Colin Defant]{Colin Defant}
\address[]{Department of Mathematics, Massachusetts Institute of Technology, Cambridge, MA 02139, USA}
\email{colindefant@gmail.com}

\author[Rupert Li]{Rupert Li}
\address[]{Massachusetts Institute of Technology, Cambridge, MA 02139, USA}
\email{rupertli@mit.edu}

\author[James Propp]{James Propp}
\address[]{Department of Mathematical Sciences, UMass Lowell, Lowell, MA 01854, USA}
\email{jamespropp@gmail.com}

\author[Benjamin Young]{Benjamin Young}
\address[]{Department of Mathematics, University of Oregon, Eugene OR 97403 USA}
\email{bjy@uoregon.edu}

\maketitle

\begin{abstract}
    Propp recently introduced regions in the hexagonal grid called \emph{benzels} and stated several enumerative conjectures about the tilings of benzels using two types of prototiles called \emph{stones} and \emph{bones}. We resolve two of his conjectures and prove some additional results that he left tacit. In order to solve these problems, we first transfer benzels into the square grid. One of our primary tools, which we combine with several new ideas, is a bijection (rediscovered by Stanton and White and often attributed to them although it is considerably older) between $k$-ribbon tableaux of certain skew shapes and certain $k$-tuples of Young tableaux. 
\end{abstract}

\section{Introduction}\label{SecIntro}

In \cite{ConwayLagarias}, Conway and Lagarias studied some tiling problems in which the regions that are to be tiled are, like the tiles themselves, composed of regular hexagons in a hexagonal grid; such regions are sometimes called \dfn{polyhexes}. Using combinatorial group theory in a clever and novel fashion, they were able to prove necessary and sufficient conditions for tileability. Later, Thurston \cite{Thurston} provided alternative perspectives and further results, and (more relevantly to our work here) Lagarias and Romano \cite{LagariasRomano} determined the exact number of tilings for a particular one-parameter family of polyhex tiling problems. Meanwhile, physicists had worked in an essentially equivalent (more precisely, dual) setting, studying \dfn{trimer covers} \cite{VN} of the regular 6-valent planar graph, but the physicists' interests were in asymptotic enumeration, not exact formulas for finite subgraphs of the 6-valent grid; furthermore, their proofs relied on the controversial Bethe ansatz, which tends to give correct answers but has not been rigorously established in all the contexts in which it has been applied.

In \cite{Propp2022trimer}, inspired by the large and still-growing literature on exact enumeration of tilings (see \cite{ProppSurvey} for an overview), Propp proposed using the tiles studied by Conway and Lagarias to tile different sorts of regions in the hexagonal grid not considered in earlier literature, and he made numerous conjectures regarding the exact number of tilings of those regions.  We prove some of Propp's conjectures here.

It is convenient to view the hexagonal grid as a tiling of the complex plane with regular hexagons of side length $1$. We can uniquely specify the exact positions of the unit hexagons, which we call the \dfn{cells} of the grid, by declaring that one of the hexagons has vertices $1 \pm 1,1\pm\omega,1\pm\omega^2$, where $\omega=e^{2\pi i/3}$.
Suppose $a$ and $b$ are positive integers satisfying $2\leq a\leq 2b$ and $2\leq b\leq 2a$. Following \cite{Propp2022trimer}, we define the \dfn{$(a,b)$-benzel} to be the union of the cells lying entirely within the hexagon with vertices $a\omega+b,-a\omega^2-b,a\omega^2+b\omega,-a-b\omega,a+b\omega^2,-a\omega-b\omega^2$.
(This is the polyhex that we will attempt to tile using polyhexes consisting of three hexagonal cells, aka \dfn{trihexes}.)
Note that this hexagon is centered at $0$, is invariant under rotation by $120^\circ$, and has sides whose lengths alternate between $2a-b$ and $2b-a$.
\cref{Fig1} shows the $(7,8)$-benzel. It can be shown (see \cite{Propp2022pentagonal}) that the number of cells in the $(a,b)$-benzel is 
\begin{equation}\label{Eq:BenzelSize}
    \begin{cases} \frac{-a^2+4ab-b^2-a-b}{2} & \text{if } a + b \equiv 0 \text{ or }2 \pmod{3}, \\
    \frac{-a^2+4ab-b^2-a-b+2}{2} & \text{if } a + b \equiv 1 \pmod{3}. \end{cases}
\end{equation}

\begin{figure}[ht]
  \begin{center}{\includegraphics[height=7cm]{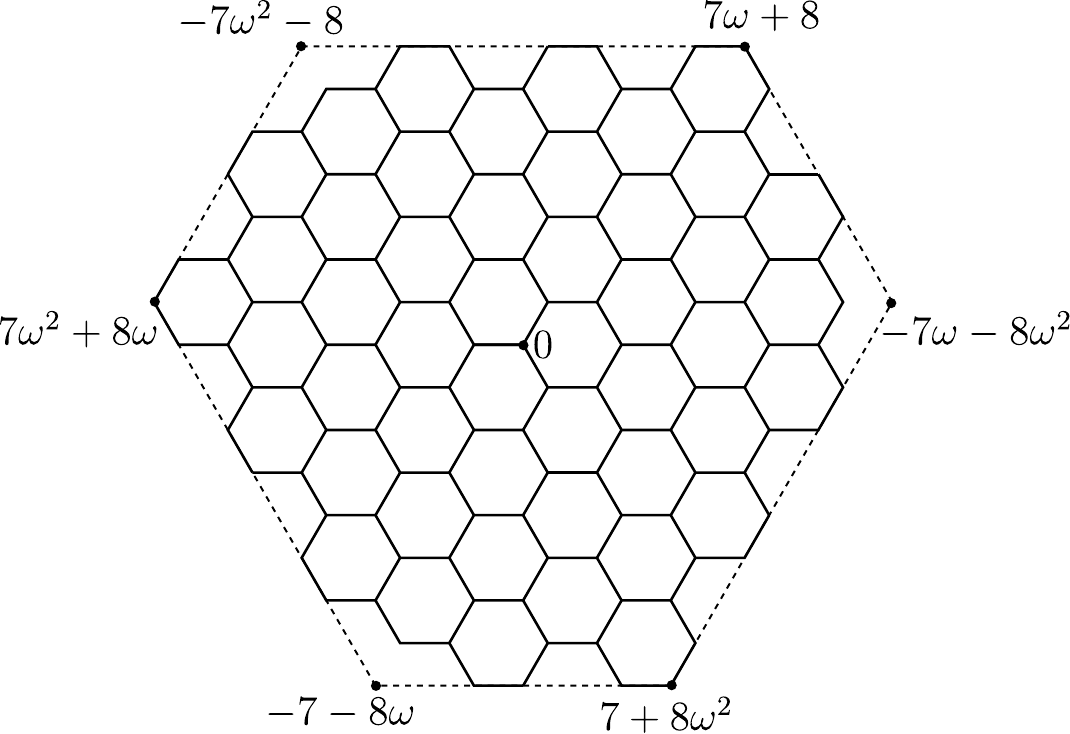}}
  \end{center}
  \caption{The $(7,8)$-benzel.}\label{Fig1}
\end{figure}

\begin{figure}[ht]
  \begin{center}{\includegraphics[height=2.467cm]{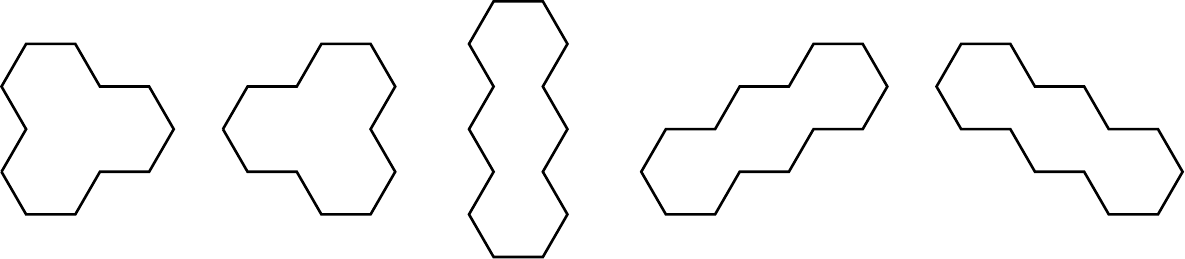}}
  \end{center}
  \caption{The right stone, left stone, vertical bone, rising bone, and falling bone, from left to right, respectively.}\label{Fig2}
\end{figure}

The prototiles in \cite{ConwayLagarias} and \cite{Propp2022trimer} consist of three cells and are of two forms: a \dfn{stone} consists of three hexagonal cells arranged in a triangle while a \dfn{bone} consists of three hexagonal cells arranged in a line.
(Conway and Lagarias called them $T_2$'s and $L_3$'s and Thurston \cite{Thurston} called them $T_2$'s and \emph{tribones}.) We will consider the various translationally-inequivalent rotations of these shapes to be distinct prototiles. Thus, there are really five different prototiles, which are shown in \cref{Fig2}. We call them the \dfn{right stone}, \dfn{left stone}, \dfn{vertical bone}, \dfn{rising bone}, and \dfn{falling bone}. 
Tilings of regions in the hexagonal grid using these prototiles will be referred to as \dfn{stones-and-bones tilings}.

Conway and Lagarias showed that for any simply-connected region in the hexagonal grid that admits a stones-and-bones tiling, the number of right stones minus the number of left stones in such a tiling is invariant (i.e., depends only on the region and not the specific tiling).
This is the \dfn{Conway--Lagarias invariant} of the region.
Propp \cite{Propp2022pentagonal} showed that the Conway--Lagarias invariant of the $(a,b)$-benzel is
\begin{align*}
    \begin{cases}
    \frac{3a^2-6ab+3b^2-a-b}{6} & \text{if } a+b\equiv0\pmod{3}, \\
    \frac{-a^2+4ab-b^2-a-b+2}{6} & \text{if } a+b\equiv1\pmod{3}, \\
    \frac{3a^2-6ab+3b^2+a+b-2}{6} & \text{if } a+b\equiv2\pmod{3}.
    \end{cases}
\end{align*}
Note that when $a+b\equiv1\pmod{3}$, the number of cells in the $(a,b)$-benzel is three times its Conway--Lagarias invariant, which implies (because each tile uses $3$ cells) that a stones-and-bones tiling must consist entirely of right stones. We will prove that such a tiling exists and is unique. 

\begin{theorem}\label{theorem:1mod3}
Let $a$ and $b$ be integers such that $2\leq a\leq 2b$, $2\leq b\leq 2a$, and $a+b\equiv1\pmod{3}$. The $(a,b)$-benzel has a unique stones-and-bones tiling, and this tiling consists entirely of right stones.
\end{theorem}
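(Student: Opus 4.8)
The statement has two parts. That every stones-and-bones tiling of the $(a,b)$-benzel consists entirely of right stones is the remark already recorded above: a tiling of a region with $N$ cells has exactly $N/3$ tiles, and when $a+b\equiv 1\pmod 3$ the count in \eqref{Eq:BenzelSize} shows that $N$ equals three times the Conway--Lagarias invariant, so $N/3$ equals that invariant; since the invariant is (number of right stones) minus (number of left stones) whereas $N/3$ is (number of right stones) plus (number of left stones) plus (number of bones), there are no left stones and no bones. So the real content of \cref{theorem:1mod3} is the existence and uniqueness of a tiling of the $(a,b)$-benzel by right stones.

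Uniqueness should follow from a direct forcing argument. The centers of the cells of the benzel form a triangular lattice, and a right stone is a triangle of three mutually adjacent cells of one fixed orientation. Consider the extreme cell of the benzel obtained by taking the topmost cell and, among those, the leftmost one. Since no cell of the benzel lies above this cell, and none lies to its left in its row, a short case check (whose details depend on the orientation of the right stone) shows that in any tiling by right stones this cell is covered by exactly one right stone, so that stone is forced. Deleting it leaves a strictly smaller region, and induction on the number of cells shows that a tiling of the $(a,b)$-benzel by right stones, if one exists, is unique.

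Existence is the crux, and for it I would use the transfer of benzels into the square grid together with the abacus bijection. Under the transfer, the $(a,b)$-benzel becomes a skew Young diagram $\lambda/\mu$, right stones become border strips (ribbons) of size $3$ all having one fixed shape, and a tiling by right stones becomes a tiling of $\lambda/\mu$ by such ribbons. Passing to the $3$-runner abacus, $\lambda$ and $\mu$ should have the same $3$-core --- this is precisely where the hypothesis $a+b\equiv 1\pmod 3$ enters --- and a ribbon tiling corresponds to a sequence of single-step bead moves carrying the abacus of $\mu$ to that of $\lambda$, which can be performed runner by runner. The point is that requiring every ribbon to have the same fixed shape is a stringent constraint (already a $2\times 3$ rectangle admits no tiling by translates of a single fixed $L$-shaped triomino), and it should pin down the bead moves on each runner uniquely. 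Concretely, this amounts to writing $\lambda$ and $\mu$ explicitly in terms of $a$ and $b$, computing the $3$-quotient of $\lambda/\mu$, and checking that the congruence $a+b\equiv 1\pmod 3$ forces each component of the quotient to be degenerate enough --- a single row, a single column, or empty --- that there is exactly one admissible sequence of moves; that sequence produces a tiling, which by the previous paragraph is the tiling.

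The main obstacle I anticipate is exactly this last bookkeeping: identifying the skew shape $\lambda/\mu$ attached to the benzel and its $3$-quotient as explicit functions of $(a,b)$, and verifying that $a+b\equiv 1\pmod 3$ makes the quotient degenerate (equivalently, that the forced peeling of the preceding paragraph never stalls). A more hands-on route to existence, avoiding the abacus, would be to use the threefold rotational symmetry of the benzel to cut it into three congruent staircase-shaped pieces, tile each piece by an evident stack of right stones, and then check that the three pieces meet without conflict along the cuts. Either way, once the correct explicit description is secured, existence and uniqueness come out together, and the same machinery is what one would then develop further to handle the cases $a+b\equiv 0,2\pmod 3$ elsewhere in the paper.
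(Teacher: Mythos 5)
Your first paragraph matches the paper exactly: the Conway--Lagarias invariant together with \eqref{Eq:BenzelSize} forces every tile to be a right stone, and the real content is existence and uniqueness of the all-right-stone tiling. Your uniqueness sketch (greedy forcing at an extreme cell, then induction on the number of cells) is a reasonable alternative to what the paper does, though it is only a sketch and, as you note, it is conditional on existence.

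Existence is where there is a genuine gap. The abacus route you outline is not carried out, and it has a conceptual problem that the ``bookkeeping'' you defer will not fix: the abacus bijection enumerates \emph{all} $3$-ribbon tableaux of a skew shape, with no control over which of the four $3$-ribbon shapes each tile takes, so computing a $3$-quotient and finding it ``degenerate'' does not by itself force every ribbon to be a mountain stone. (Indeed, the paper only ever uses the abacus machinery through \cref{lem:no_green_to_green} and \cref{rem:correct_slope}, which constrain tiles relative to the green parallelograms but never pin down a single prototile.) Moreover, the transferred benzel $B_{a,b}$ is not a skew shape $\lambda/\mu$; in the paper it sits inside $\lambda_N$ with a complement consisting of valley-stone columns plus a partition $\theta_{a,b}$, so the premise of your abacus plan already fails. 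The paper's actual argument is far more elementary and proves existence and uniqueness simultaneously: assume $a\le b$ by the reflection symmetry, observe that the $\frac{2a-b+1}{3}$ cells touching the bottom side of the bounding hexagon must be covered by a forced line of right stones, apply the $120^\circ$ rotational symmetry to force the analogous lines along the upper-left and upper-right sides, and note that removing these three lines leaves exactly the $(a,b-3)$-benzel; induction on $a+b$ with base case the $(2,2)$-benzel (a single right stone) finishes the proof. Your alternative ``three congruent staircase pieces'' suggestion is closer in spirit to this, but you do not execute it, so as written the existence half of the theorem is not established.
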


Frequently, tiling problems will restrict the set of possible prototiles.
There are $2^5-1=31$ nonempty subsets of our set of 5 prototiles. However, as benzels have threefold rotational symmetry---which preserves the two stones---the types of bones allowed does not affect the answer to enumerative questions; all that matters is the number of different bones that are allowed.
Hence, there are really only $2\cdot2\cdot4-1=15$ inequivalent tiling problems to consider.
Moreover, the $(a,b)$-benzel and $(b,a)$-benzel are reflections of each other across the real axis; as this reflection preserves the two stones as well as the number of types of allowed bones, each tiling problem is the same for the $(a,b)$-benzel as the $(b,a)$-benzel. We will often make use of this symmetry in order to assume without loss of generality that $a \leq b$.

Much of our work centers around the enumeration of tilings of benzels in which we allow just two types of bones and one type of stone. Since we only care about the number of allowed bones, we may assume that we are using rising bones and falling bones. Propp conjectured (see \cite[Problems~2 and~3]{Propp2022trimer}) that if $(a,b)$ is of the form $(3n,3n)$ or $(3n+1,3n+2)$, then the number of tilings of the $(a,b)$-benzel using left stones, rising bones, and falling bones is $(2n)!!=2^n n!$. One of our main results resolves this conjecture; in fact, we will determine the number of such tilings of the $(a,b)$-benzel for all choices of $a$ and $b$. 

\begin{theorem}[cf.~\protect{\cite[Problems 2 and 3]{Propp2022trimer}}]\label{thm:mountainless}
Let $a$ and $b$ be integers with $2\leq a\leq b\leq 2a$. If there is an integer $n$ such that $(a,b)=(3n,3n)$ or $(a,b)=(3n+1,3n+2)$, then there are exactly $(2n)!!$ tilings of the $(a,b)$-benzel by left stones, rising bones, and falling bones. Otherwise, there are no such tilings of the $(a,b)$-benzel.  
\end{theorem}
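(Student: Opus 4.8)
The plan is to transfer the tiling problem into the square grid, where left stones, rising bones, and falling bones become (via the standard dictionary between lozenge tilings and families of nonintersecting lattice paths, or equivalently between trihex tilings and ribbon tilings) a question about $3$-ribbon tableaux of a suitable skew shape. Concretely, I would first set up the bijection that sends the $(a,b)$-benzel to a skew Young diagram $\lambda/\mu$ in the square grid, under which a stones-and-bones tiling using exactly one chirality of stone and two kinds of bones corresponds to a $3$-ribbon tiling (or $3$-ribbon tableau) of $\lambda/\mu$; the fact that only left stones are allowed should force a constraint that makes the ribbon content behave in a controlled way. The size condition from \eqref{Eq:BenzelSize} together with the Conway--Lagarias invariant computation quoted above tells us exactly how many stones versus bones any such tiling must use, and in particular pins down when $\lambda/\mu$ can be $3$-ribbon tileable at all; this is where the ``otherwise, there are no such tilings'' half of the statement will come from --- a $3$-ribbon tiling exists only if the $3$-core of $\lambda/\mu$ is empty (equivalently the $3$-quotient accounts for all the cells), and I expect this to hold precisely when $(a,b)$ is $(3n,3n)$ or $(3n+1,3n+2)$ (up to the $a\leftrightarrow b$ symmetry already invoked in the text).

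Next, assuming we are in one of the two good families, I would invoke the Stanton--White abacus bijection (the one advertised in the abstract) to convert $3$-ribbon tableaux of $\lambda/\mu$ into triples of Young tableaux of the shapes given by the $3$-quotient of $\lambda/\mu$. The key computational step is to identify these three quotient shapes explicitly for $(a,b)=(3n,3n)$ and for $(a,b)=(3n+1,3n+2)$. I would compute the beta-set / abacus of $\lambda$ and of $\mu$, distribute beads among the three runners, and read off the three skew shapes $\nu^{(0)}/\rho^{(0)}, \nu^{(1)}/\rho^{(1)}, \nu^{(2)}/\rho^{(2)}$. The constraint that the tiling uses only \emph{left} stones (no right stones, no vertical bones in this normalization) should translate into each quotient shape being forced into a shape counted by a double factorial --- I anticipate that two of the three quotient shapes will be empty (or single cells) and the third will be a staircase-like or two-row-type shape whose number of standard/column-strict fillings subject to the residual sign constraint is exactly $(2n)!! = 2^n n!$. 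The cleanest outcome would be that the relevant count is the number of standard Young tableaux of a shape like $\delta_n$ plus something, or a product $\prod_{i=1}^n (2i)$, realized as e.g. the number of ways to build up a shape one domino-column at a time.

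Then I would do the enumeration on the quotient side: count the triples of tableaux of the identified shapes, respecting whatever sign/chirality condition survived the bijection, and check it equals $(2n)!!$. I expect this to reduce to a clean product formula --- either directly by a hook-length or Lindström--Gessel--Viennot determinant evaluation, or by exhibiting an explicit recursive bijection with perfect matchings of $2n$ points (which are counted by $(2n-1)!!$) or with signed involutions on $[2n]$ (counted by $(2n)!! = 2^n n!$). Finally I would reconcile the two special cases: $(3n,3n)$ and $(3n+1,3n+2)$ should give the \emph{same} answer, which is a good internal consistency check, and I would verify small cases $n=1,2$ by hand against Propp's data.

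The main obstacle I foresee is the first transfer step together with the bookkeeping of which prototiles survive: getting the benzel-to-skew-shape dictionary exactly right so that ``left stones + two bones'' corresponds precisely to $3$-ribbon tilings (and not, say, to $3$-ribbon tilings with an extra parity decoration), and then correctly computing the $3$-quotient of the resulting skew shape for the two infinite families. The parity/sign condition coming from banning right stones is subtle --- it is essentially the Conway--Lagarias invariant in disguise --- and threading it correctly through the abacus bijection so that the final count is $2^n n!$ rather than, say, $n!$ or $(2n-1)!!$ is the delicate part. Once the quotient shapes are correctly identified, the enumeration itself should be a routine product-formula computation.
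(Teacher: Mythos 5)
Your high-level starting point (transfer to the square grid, trichotomy on $a+b$ modulo $3$, and handling $a+b\equiv 1,2\pmod 3$ via the Conway--Lagarias invariant) matches the paper, but the core of your plan has a gap that I do not see how to close. The abacus bijection puts \emph{all} $3$-ribbon tableaux of a shape in bijection with tuples of Young tableaux; it is blind to which of the four $3$-ribbon shapes (mountain stone, valley stone, positive bone, negative bone) each tile has. The theorem, however, counts tilings that \emph{avoid} one specific ribbon shape (the mountain stone, i.e.\ the image of the forbidden right stone). Nothing in your proposal explains how the ``no right stones'' restriction survives passage to the quotient side; you flag this as ``subtle'' and hope it becomes a sign condition, but there is no such clean translation, and the paper does not attempt one. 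Instead it uses structural consequences of the related $\Compress$ bijection --- every tile in a $3$-ribbon tiling of the ambient partition $\lambda_N$ (which has $3$-quotient $(\square_N,\emptyset,\square_N)$) meets exactly one ``green'' column, with a forced slope --- to prove that no tile of a mountainless tiling can cross the borders between the $N$ nested V-shaped layers of $\lambda_N$. The count $(2n)!!$ then comes from an elementary local analysis: each layer $V_m$ has Conway--Lagarias invariant $-1$, hence contains exactly one valley stone, which can sit in exactly $2m$ positions, giving $\prod_{m=1}^{n}2m=(2n)!!$. It is not a hook-length, Lindstr\"om--Gessel--Viennot, or tableau count at all; moreover the abacus bijection counts \emph{ordered} ribbon tableaux, whereas the theorem counts unordered tilings.

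Two further points where your plan would fail as stated. First, the benzel's image $B_{a,b}$ in the square grid is not a (skew) Young diagram; the paper must embed it into $\lambda_N$ and prove separately, using the core/quotient machinery, that the complement $\lambda_N\setminus B_{a,b}$ admits a $3$-ribbon tiling, so that mountainless tilings of $B_{a,b}$ extend to $3$-ribbon tilings of $\lambda_N$ where the layer analysis applies. Second, your expectation that the ``otherwise no tilings'' half reduces to a nonempty $3$-core is wrong in the remaining case $a+b\equiv 0\pmod 3$ with $b-a\geq 2$: there the obstruction is specifically to \emph{mountainless} tilings rather than to $3$-ribbon tilings in general, and the paper proves it by counting boxes labelled by the residue classes A, B, C on either side of a well-chosen layer boundary --- an argument with no analogue in a pure core/quotient computation.
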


On the other hand, we have the following theorem regarding tilings with right stones, rising bones, and falling bones.

\begin{theorem}\label{thm:valleyless}
Let $a$ and $b$ be integers with $2\leq a\leq b\leq 2a$. If $b=2a$ or $a+b\equiv 1\pmod 3$, then there is a unique tiling of the $(a,b)$-benzel by right stones, rising bones, and falling bones. If $b<2a$ and $a+b\equiv 0\pmod 3$, then there are no such tilings.  
\end{theorem}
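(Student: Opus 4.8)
The plan is to split the statement according to $a+b\bmod 3$. When $a+b\equiv 1\pmod 3$ there is nothing new to do: \cref{theorem:1mod3} already gives the $(a,b)$-benzel a unique stones-and-bones tiling using only right stones, so this is in particular the unique tiling by right stones, rising bones, and falling bones. All of the content is therefore in the case $a+b\equiv 0\pmod 3$, where we must produce a unique such tiling when $b=2a$ and show that none exists when $b<2a$.

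A quick remark handles many instances of the $b<2a$ case. Any tiling using no left stones has (number of right stones)$-$(number of left stones)$=$(number of right stones)$\ge 0$; but this difference is the Conway--Lagarias invariant, which here equals $\tfrac{3(a-b)^2-(a+b)}{6}$. Writing $b=a+k$ with $0\le k<a$, the invariant is $\tfrac{3k^2-k-2a}{6}$, which is negative whenever $2a>k(3k-1)$ --- in particular for $k\le 1$, and for a further range of small $k$. For such $(a,b)$ no tiling exists at all. This does not reach the whole $b<2a$ case, though: once $k$ is large the invariant is a nonnegative integer and the forced number of bones $\tfrac{ab-2(a-b)^2}{3}=\tfrac{(a-k)(a+2k)}{3}$ is a positive integer, so nonexistence there is genuinely not a counting phenomenon. (For instance, the $(5,7)$-benzel would have to be tiled by exactly nine bones and no stones.) These instances, and the $b=2a$ existence claim, require the structural apparatus.

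For that we transport the $(a,b)$-benzel into the square grid, where it becomes a skew shape, and identify stones-and-bones tilings with $3$-ribbon tilings of that shape; the prototile type of each tile is recorded by combinatorial data attached to the corresponding ribbon, and allowing only right stones, rising bones, and falling bones forbids certain ribbon data --- in effect, the shapes and orientations that a left stone would contribute. We then feed this into the Stanton--White abacus bijection, which records a $3$-ribbon tiling by a $3$-core together with a $3$-quotient assembled, ribbon by ribbon, from bead moves on an abacus; the forbidden ribbon data become a forbidden kind of bead move, and the problem reduces to a tractable condition on abacus configurations. When $b=2a$ the benzel degenerates to a triangle, the transported shape is rigid enough that exactly one abacus configuration survives, and it corresponds to the all-right-stones tiling --- whose existence, and whose stone count $\binom{a}{2}$, we also confirm directly by stripping off a boundary layer of forced right stones. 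When $b<2a$ and $a+b\equiv 0\pmod 3$, we instead exhibit in the transported shape a cell that every admissible ribbon tiling is forced to cover with a left-stone ribbon, so that no tiling by right stones, rising bones, and falling bones can exist.

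The crux is this last forcing argument. Because it is not a counting obstruction, it must engage the precise geometry of the transported skew shape near its inner corners and, in particular, the way the two boundary-edge lengths $2a-b$ and $2b-a$ of the benzel control where ribbons can begin; arranging this argument uniformly in $(a,b)$, rather than as a proliferation of boundary cases, is where the real effort will go. Verifying that the abacus description and the elementary Conway--Lagarias bound agree on the overlap of their ranges is a useful sanity check but should be routine.
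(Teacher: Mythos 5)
There is a genuine gap: the heart of the theorem is the nonexistence claim for $b<2a$ with $a+b\equiv 0\pmod 3$, and you do not actually prove it. Your Conway--Lagarias computation correctly disposes of the instances where the invariant $\tfrac{3(a-b)^2-(a+b)}{6}$ is negative (e.g.\ $a=b$), and your arithmetic there checks out, but for the remaining instances you only announce that one should ``exhibit a cell that every admissible ribbon tiling is forced to cover with a left-stone ribbon'' and then concede that arranging this uniformly in $(a,b)$ ``is where the real effort will go.'' That forcing step is precisely the content of the theorem, and no candidate cell, no location, and no mechanism for the forcing is supplied; as written the proposal is a plan, not a proof. For comparison, the paper's obstruction is not a single forced cell but a local counting argument at the top of the region: after embedding $B_{a,b}$ into the staircase-of-V's partition $\lambda_N$, \cref{lem:valleys_C} shows that any tile crossing the top red border in a valleyless tiling must be a mountain stone contributing exactly $2$ of the $6$ boxes of $V_1$, so the number $c$ of such tiles satisfies $6-2c\equiv 0\pmod 3$, forcing $c\in\{0,3\}$; $c=3$ is geometrically impossible, and $c=0$ forces a valleyless tiling of $V_1$ alone, which does not exist since every tiling of $V_1$ contains a valley stone. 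Nothing in your proposal substitutes for this (or an equivalent) argument.

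Two smaller points. First, the $b=2a$ case needs no new apparatus: since $a+(2a-2)\equiv 1\pmod 3$ and the $(a,2a)$-, $(a,2a-1)$-, and $(a,2a-2)$-benzels coincide (\cref{remark:boundaryCase}), \cref{theorem:1mod3} already gives existence and uniqueness there; your proposed ``strip a boundary layer of forced right stones'' argument would likely work but is redundant. Second, your general framing --- transfer to the square grid, interpret tiles as $3$-ribbons, and use the abacus machinery --- is indeed the paper's framework, so the setup is sound; what is missing is the specific structural lemma about which ribbons can cross the boundaries between nested V-shapes, which is what converts the setup into a proof.
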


\cref{thm:valleyless} says nothing about what happens when $a+b\equiv 2\pmod 3$.; Propp \cite{Propp2022trimer} gave the following conjecture in this case.
\begin{conjecture}[\protect{\cite[Problem 5]{Propp2022trimer}}]\label{conj}
Let $a$ and $b$ be integers with $2\leq a\leq b\leq 2a$ and $a+b\equiv 2\pmod 3$. Let $n$ and $k$ be the integers such that $(a,b)=(n+3k,2n+3k-1)$. The $(a,b)$-benzel has
\[ \prod_{i=1}^k \frac{(2i)!(2i+2n-2)!}{(i+n-1)!(i+n+k-1)!} \]
tilings by right stones, rising bones, and falling bones. 
\end{conjecture}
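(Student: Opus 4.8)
To prove \cref{conj}, the plan is to run the same pipeline that handles \cref{thm:mountainless,thm:valleyless}. Write $(a,b)=(n+3k,2n+3k-1)$, transfer the $(a,b)$-benzel into the square grid, and apply $\Compress$ to obtain a skew shape $\lambda/\mu$ whose $3$-core is forced by the residue $a+b\equiv 2\pmod 3$ and whose $3$-ribbon tableaux are exactly the images under transfer of the stones-and-bones tilings of the benzel. Under this correspondence, the three bone types account for the straight and the two bent ribbon shapes once one uses the threefold symmetry of the benzel, and a tiling that uses only right stones, rising bones, and falling bones should correspond to a $3$-ribbon tableau of $\lambda/\mu$ in which the ribbon shape coming from \emph{left} stones never occurs. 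The first real task is therefore to translate the condition ``no left stones'' through the abacus bijection into a condition on the associated triple $(\lambda^{(0)},\lambda^{(1)},\lambda^{(2)})$ of Young tableaux.

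I expect this condition to be rigid: after passing through the bijection it should pin down two of the three quotient tableaux completely (for instance forcing them to be empty, or to be the unique tableau of a prescribed rectangular shape) and constrain the third to range over a family enumerated by a clean product. Equivalently, it may be most transparent to describe the surviving configurations directly as families of non-intersecting lattice paths with endpoints determined by $n$ and $k$. Either way the count should collapse to a Lindström--Gessel--Viennot determinant, hence to the enumeration of semistandard tableaux of a rectangle, or of a symmetry class of boxed plane partitions / lozenge tilings. The factors $(2i)!$ and $(2i+2n-2)!$ in the conjectured product are a strong hint that the relevant object carries a doubling symmetry --- for instance that the paths are symmetric about an axis, so that one is really counting a ``halved'' lozenge region of the kind whose enumeration is classical (symplectic characters / King tableaux). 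The last step is then to evaluate the determinant, or to match the count against the appropriate classical product formula, using the standard determinant-evaluation toolkit, perhaps after separating cases according to the parity of $n$.

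An independent route --- and a useful sanity check --- would be to realize the restricted tilings as perfect matchings of a planar bipartite graph and run Kuo condensation to extract a recurrence relating benzels of different sizes, then verify that the conjectured product satisfies it together with the base data $k=0$ (where $(a,b)=(n,2n-1)$ admits a single tiling) and $n=1$ (where the formula reads $2^k k!$, matching the phenomenon behind \cref{thm:mountainless}), using \cref{thm:valleyless} to supply the instances that fall into the $a+b\equiv 0$ or $a+b\equiv 1\pmod 3$ regimes.

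The main obstacle should be the very first step: extracting a clean, essentially local description of the ``no left stones'' constraint after transfer. Restricting which \emph{bones} are allowed is largely a matter of which ribbon shapes can geometrically fit into $\lambda/\mu$, but forbidding one \emph{chirality of stone} is not symmetric under the operations that are transparent in ribbon-tableau land, so it is not clear a priori that the constraint reduces to a per-row or per-column condition on the $3$-quotient. This is precisely what distinguishes the comparatively tame cases $a+b\equiv 0,1\pmod 3$ of \cref{thm:valleyless} from the rich case $a+b\equiv 2\pmod 3$. A secondary obstacle is that the conjectured formula is not a plain MacMahon box count, so correctly identifying the symmetry class (or the correct determinant) and evaluating it in closed form, uniformly in $n$ and $k$, may itself require genuine work.
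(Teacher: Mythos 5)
This statement is a conjecture that the paper does not prove: the authors state explicitly that they ``have made no attempt toward proving \cref{conj}'' and only remark that transferring to the square grid might be fruitful. So there is no proof in the paper to compare against, and your proposal is not a proof either --- it is a research plan in which every load-bearing step is conditional (``I expect this condition to be rigid,'' ``the count should collapse to a Lindstr\"om--Gessel--Viennot determinant,'' ``may itself require genuine work''). You correctly identify the crux yourself: nobody has shown that the ``no left stones'' (i.e., valleyless) constraint becomes a local, tractable condition after the transfer, and without that lemma nothing downstream (the determinant, the symmetry class, the product evaluation) can even be set up. Your sanity checks at $k=0$ and $n=1$ are consistent with the formula, but consistency checks are not a proof.

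Two more concrete problems with the plan as written. First, $\Compress$ as defined in the paper requires a straight shape with \emph{empty} $k$-core and an empty component in its $k$-quotient; in the regime $a+b\equiv 2\pmod 3$ the relevant $3$-core is nonempty (this is exactly why \cref{thm:valleyless} stops short of this case), so the machinery of \cref{SecCompress} does not apply off the shelf, and the benzel image $B_{a,b}$ is in any case not itself a Young diagram or skew shape --- the paper's technique is to embed it in $\lambda_N$ and tile the complement, which you would still need to redo here. Second, the arguments that make the $a+b\equiv 0\pmod 3$ cases work (\cref{lem:valleys_C} and the A/B/C counting across red borders) are parity/counting arguments that kill tilings or force uniqueness; they give no mechanism for producing a \emph{rich} enumeration like the conjectured product, so ``run the same pipeline'' cannot be the whole story. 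The conjecture remains open, and your proposal, while pointed in a reasonable direction, does not close it.
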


Our approach to proving \cref{thm:mountainless,thm:valleyless} is to first transfer benzels from the hexagonal grid into the square grid. This allows us to use tools from the theory of ribbon tilings of Young diagrams. In particular, if $\lambda$ is a partition with $k$-core $\kappa$ and $k$-quotient $(\lambda^{(0)},\ldots,\lambda^{(k-1)})$, then there is a bijection (dubbed the ``abacus bijection'' by Gordon James \cite{JamesKerber} and rediscovered by Dennis Stanton and Dennis White (see \cite{StantonWhite} and \cite{FominStanton}) that maps $k$-ribbon tableaux of shape $\lambda/\kappa$ to $k$-tuple Young tableaux of shape $(\lambda^{(0)},\ldots,\lambda^{(k-1)})$. For other authors' descriptions of the bijection, see \cite{JamesKerber} or \cite{Pak}. We felt the need to provide our own discussion and proof; we hope that our treatment of this bijection will be useful to researchers.
More relevant to our purpose is a related bijection that we call $\Compress$, which is apparently new. This bijection is between $k$-ribbon tilings of a Young diagram of a particular type and $(k-1)$-tilings of a related Young diagram; it is essentially a version of the abacus bijection that ``forgets'' the ordering of the tiles in a ribbon tableau. 
Our proofs of \cref{thm:mountainless,thm:valleyless} employ these bijections as tools and combine them with several new ideas.
Although we have made no attempt toward proving \cref{conj}, we expect that it could be fruitful to transfer the problem into the square grid and employ techniques similar to those that we use.

We mention in passing that, like us, Conway and Lagarias transferred their tiling problem to the square grid. However, there are two key differences between their use of the tactic and ours.
The first difference is that, as Thurston later demonstrated, the use of the square grid is not an essential feature of the translation of tiling problems into combinatorial group theory problems; in contrast, our adaptation of the theory of cores and quotients requires working in the square grid (where Ferrers graphs and Young diagrams live).
The second difference is that in \cite{ConwayLagarias}, all three bones are transferred to the square grid, at the cost of making one of them disconnected; in contrast, such disconnected tiles are forbidden in the theory of ribbon tilings.

The outline for the rest of the paper is as follows. \cref{Sec:Square} is brief; its purpose is to explain how to translate certain tiling problems in the hexagonal grid into tiling problems in the square grid.
In \cref{Sec:Abacus}, we provide a thorough discussion of $k$-cores, $k$-quotients, and the abacus bijection. \Cref{SecCompress} defines and develops the $\Compress$ bijection. \cref{Sec1mod3} is devoted to proving \cref{theorem:1mod3}.
We use the properties of $\Compress$ discussed in \cref{SecCompress} to prove \cref{thm:mountainless} in \cref{SecMountainless} and to prove \cref{thm:valleyless} in \cref{SecValleyless}.

\section{Transferring Tilings to the Square Grid}\label{Sec:Square}

In this article, we draw the square grid in the complex plane so that the unit square cells are oriented like diamonds. More precisely, each square grid cell has vertices of the form $(a\pm 1)+bi$ and $a+(b\pm 1)i$, where $a$ and $b$ are integers such that $a+b$ is odd. 
We call each such square cell a \dfn{box}.
A great deal of theory has already been developed for tilings of regions in the square grid; in order to make use of it, we will need a way of converting tilings in the hexagonal grid into tilings in the square grid.
Notice that there are unit-width vertical strips of the complex plane that are traversed only by horizontal edges of the hexagonal grid, not by edges of the other two orientations.
Removing these vertical strips and compressing the plane appropriately yields a bijective mapping from the hexagons in the hexagonal grid to rhombuses in the resulting rhombic grid.
Rescaling the axes suitably transforms these rhombuses into squares of side length $\sqrt 2$. Let us then rotate the resulting grid by $90^\circ$ counterclockwise.

\begin{figure}[ht]
  \begin{center}{\includegraphics[height=3.281cm]{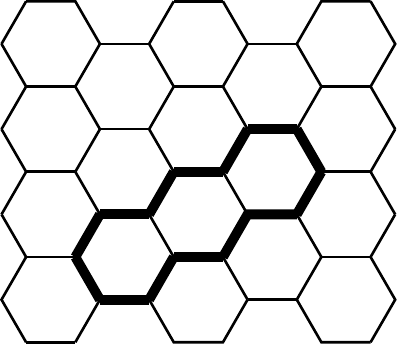}\raisebox{1.5cm}{$\xrightarrow{\text{delete strips}}$}\includegraphics[height=3.281cm]{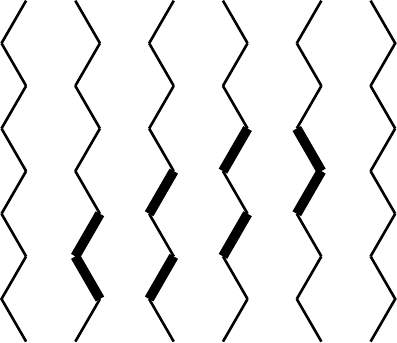}\raisebox{1.5cm}{$\xrightarrow{\text{compress}}$}\includegraphics[height=3.281cm]{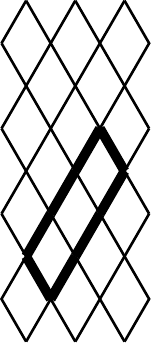}\raisebox{1.5cm}{$\xrightarrow{\text{scale and rotate}}$}\raisebox{.88cm}{\includegraphics[height=1.452cm]{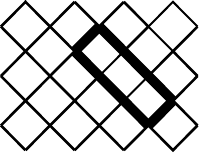}}}
  \end{center}
  \caption{The steps that transform the hexagonal grid into the square grid. The bold rising bone in the hexagonal grid becomes a negative bone in the square grid.}\label{Fig3}
\end{figure}

The prototiles of Figure \ref{Fig2}, transferred to this square grid, are shown in \cref{Fig4}.
Notice that the vertical bone becomes a union of three disconnected boxes.
When at most two types of bones are allowed, we can assume without loss of generality that the vertical bone is forbidden, which allows the resulting square grid tiling problem to exclusively use prototiles of connected boxes.
In fact, when the vertical bone is forbidden, we can specifically leverage the theory of \emph{ribbon tilings}.
A \dfn{ribbon} is a connected union of boxes in the square grid in which no two boxes have the same $x$-coordinate; a \dfn{$k$-ribbon} is a ribbon that contains exactly $k$ unit squares. 
Our four prototiles (excluding the vertical bone) in the square grid are precisely the four $3$-ribbons.
With these conventions, we will rename the right stone, left stone, rising bone, and falling bone the \dfn{mountain stone}, \dfn{valley stone}, \dfn{negative bone}, and \dfn{positive bone}.

\begin{figure}[ht]
  \begin{center}{\includegraphics[height=1.324cm]{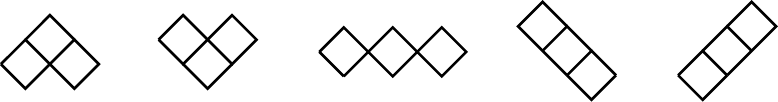}}
  \end{center}
  \caption{Transforming the prototiles in \cref{Fig2} into the square grid yields these five prototiles. From left to right, the first, second, fourth, and fifth are the mountain stone, valley stone, negative bone, and positive bone, respectively. }\label{Fig4}
\end{figure}

\section{The Abacus Bijection}\label{Sec:Abacus}

The version of the bijection that we use is due to Gordon James (see \cite{JamesKerber}) but different forms of it seem to have been discovered independently by various people working in the field of modular representation theory around 1950; this community includes H.\ Farahat, J.\ S.\ Frame, D.\ E.\ Littlewood, T.\ Nakayama, M.\ Osima, G.\ de B.\ Robinson, R.\ A.\ Staal, and R.\ M.\ Thrall. The interested reader may find more details in the book \cite{Robinson} and the references it contains.

We identify integer partitions with their Young diagrams. We will draw the Young diagram of a partition $\lambda$ in Russian notation so that it lives within the square grid as we have chosen to draw it.
Let us position the Young diagram so that its bottom-most point is $0$ and so that the boxes representing the first part of the partition have their lower-left edges lying along the ray $\mathbb R_{\geq 0}\,e^{3\pi i/4}$, as in Figure~\ref{Fig5}.

A \dfn{coordinatized abacus word} is a function $w$ from the set $\mathbb Z+1/2=\{n+1/2:n\in\mathbb Z\}$ to the alphabet $\{\circ,\bullet\}$ with the property that $w(-n-1/2)=\bullet$ and $w(n+1/2)=\circ$ for all sufficiently large $n$.
Such words also arise in the study of the exclusion process; in that literature, $\bullet$ is called a \emph{particle} and $\circ$ is called a \emph{vacancy}. We can represent such a word as a bi-infinite sequence of the symbols $\circ$ and $\bullet$, where we place a period between the symbols $w(-1/2)$ and $w(1/2)$ to indicated the position $0$. For example, $\cdots\bullet\bullet\bullet\circ\bullet\!\!\hspace{0.015in}.\hspace{0.015in}\!\!\bullet\circ\circ\circ\cdots$ represents the coordinatized abacus word $w$ given by $w(-3/2)=w(n+1/2)=\circ$ for all integers $n\geq 1$ and $w(1/2)=w(-1/2)=w(m-1/2)$ for all integers $m\leq -2$. 
An \dfn{abacus word} is an orbit of a coordinatized abacus word under the natural $\mathbb Z$-action given by the shift map. As with coordinatized abacus words, we can represent abacus words as bi-infinite sequences using the symbols $\circ$ and $\bullet$; however, we no longer include the period in this representation.  

Given a partition $\lambda$, we obtain an abacus word $\overline w_\lambda$ by traveling along its northern border from left to right and recording whether we go up or down at each step by writing the symbol $\circ$ to record an up step and the symbol $\bullet$ to record a down step. From this abacus word $\overline w_\lambda$, we obtain a coordinatized abacus word $w_\lambda$ by insisting that $w_\lambda(\ell)$ records whether the step whose midpoint has real part $\ell$ is up or down.
For example, if $\lambda=(5,5,3,3,2)$ is the partition shown in \cref{Fig5}, then $w_\lambda=\cdots\bullet\bullet\bullet\circ\circ\bullet\bullet\circ\hspace{0.02pt}.\!{}\hspace{-0.9pt}{}\circ\bullet\circ\bullet\bullet\circ\circ\circ\cdots$. 

Each abacus word $\overline w$ (which is, by definition, an orbit under the shift map) has a unique representative of the form $w_\lambda$ for some partition $\lambda$.
Indeed, in the partition $\lambda$, there must be the same number of up steps to the left of position $0$ as down steps to the right of position $0$, so
\[ |\{\ell < 0 : w_\lambda(\ell)=\circ\}| = |\{\ell \geq 0 : w_\lambda(\ell)=\bullet\}|; \]
this value is also the number of boxes of $\lambda$ that are directly above the origin. We call $w_\lambda$ the \dfn{canonical coordinatization} of $w$.

\begin{figure}[ht]
  \begin{center}{\includegraphics[height=3cm]{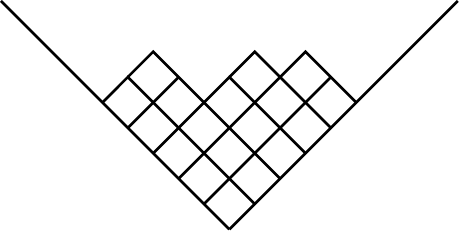}}
  \end{center}
  \caption{The Young diagram of the partition $\lambda=(5,5,3,3,2)$. We have $w_\lambda=\cdots\bullet\bullet\bullet\circ\circ\bullet\bullet\circ.\!\circ\bullet\circ\bullet\bullet\circ\circ\circ\cdots$.} \label{Fig5}
\end{figure}

Let us fix a positive integer $k$ and split the word $w_\lambda$ into $k$ words $w^{(0)}_\lambda,\ldots,w^{(k-1)}_\lambda$ so that each $w^{(j)}_\lambda$ is obtained by reading every $k$-th symbol in $w_\lambda$. In other words, $w_\lambda^{(j)}$ is the word obtained by reading $w_\lambda(k\ell+j+1/2)$ for all $\ell\in\mathbb Z$. As above, we can coordinatize each of the words $w_\lambda^{(j)}$ so that it corresponds to an integer partition $\lambda^{(j)}$. For each $0 \leq j\leq k-1$, there is a unique integer $c_j$ such that if we set $w^{(j)}_\lambda(\ell-c_j+1/2)=w_\lambda(k\ell+j+1/2)$ for all $\ell$, then $w_\lambda^{(j)}=w_{\lambda^{(j)}}$,
the abacus word associated with the partition $\lambda^{(j)}$.
The integers $c_0,\ldots,c_{k-1}$ are called the \dfn{$k$-charges} of $\lambda$; they satisfy $c_0+\cdots+c_{k-1}=0$. The tuple $(\lambda^{(0)},\ldots,\lambda^{(k-1)})$ is called the \dfn{$k$-quotient} of $\lambda$. For an example of this construction, let $k=3$, and let $\lambda=(5,5,3,3,2)$ as shown in \cref{Fig5}. We have $w_{\lambda}^{(0)}=\cdots\bullet\bullet\bullet\circ\bullet\circ\circ\circ\cdots$, $w_{\lambda}^{(1)}=\cdots\bullet\bullet\bullet\circ\bullet\bullet\bullet\circ\circ\circ\cdots$, and $w_{\lambda}^{(2)}=\cdots\bullet\bullet\bullet\circ\circ\circ\cdots$. Then $\lambda^{(0)}=(1)$, $\lambda^{(1)}=(3)$, and $\lambda^{(2)}=\emptyset$ are shown in \cref{Fig6}. The $3$-charges are $c_0=c_1=1$ and $c_2=-2$.

\begin{figure}[ht]
  \begin{center}{\includegraphics[height=1.677cm]{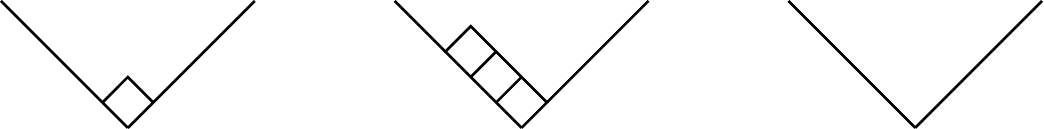}}
  \end{center}
  \caption{The $3$-quotient of the partition $\lambda=(5,5,3,3,2)$ is the triple $(\lambda^{(0)},\lambda^{(1)},\lambda^{(2)})=((1),(3),\emptyset)$.
  The $3$-charges are $c_0=c_1=1$ and $c_2=-2$.}
  \label{Fig6}
\end{figure}

Suppose there is a $k$-ribbon tile $T$ at the top of the Young diagram of $\lambda$ such that removing $T$ results in the Young diagram of a smaller partition. Let us remove it. Iterating this process, we can continue removing $k$-ribbon tiles until it is no longer possible to do so. The resulting partition $\kappa$ is called the \dfn{$k$-core} of $\lambda$; it is known that $\kappa$ does not depend on the order in which the $k$-ribbons were removed. 
We will consider the skew shape $\lambda/\kappa$. It will be convenient to think of $\kappa$ as a tile inside of $\lambda$.

A \dfn{$k$-ribbon tableau}  of shape $\lambda/\kappa$ is a tuple $(T_1,\ldots,T_m)$ of $k$-ribbon tiles such that the set $\{\kappa,T_1,\ldots,T_m\}$ forms a tiling of $\lambda$ and such that for every $1\leq i\leq m$, the set $\{\kappa, T_1,\ldots,T_i\}$ forms a tiling of some Young diagram. One can imagine adding the tiles $\kappa,T_1,\ldots,T_m$ one by one to build $\lambda$ from the bottom up. Let $\SRT_k(\lambda/\kappa)$ denote the set of $k$-ribbon tableaux of shape $\lambda/\kappa$. 

A \dfn{strict Young tableau} of shape $\lambda$ is a filling of the square boxes of $\lambda$ (which is again drawn in Russian notation) with distinct positive integers so that numbers strictly increase as we move up (northeast or northwest). We write $\cont(\mathcal T)$ for the \dfn{content} of a strict Young tableau $\mathcal T$, which is just the set of integers appearing in $\mathcal T$. 

Suppose $(\alpha_0,\ldots,\alpha_{k-1})$ is a $k$-tuple of partitions. We define a \dfn{$k$-tuple Young tableau} of shape $(\alpha_0,\ldots,\alpha_{k-1})$ to be a $k$-tuple $(\mathcal T_0,\ldots,\mathcal T_{k-1})$ of strict Young tableaux such that each $\mathcal T_j$ has shape $\alpha_j$ and $\cont(\mathcal T_0)\cup\cdots\cup\cont(\mathcal T_{k-1})=[|\alpha_0|+\cdots+|\alpha_{k-1}|]$.
(Here we use $[n]$ to denote $\{1,2,\dots,n\}$.)
Note that these conditions force the sets $\cont(\mathcal T_0),\ldots,\cont(\mathcal T_{k-1})$ to be pairwise disjoint. Let $\TYT_k(\alpha_0,\ldots,\alpha_{k-1})$ be the set of $k$-tuple Young tableaux of shape $(\alpha_0,\ldots,\alpha_{k-1})$. 

As above, let $\lambda$ be a partition with $k$-quotient $(\lambda^{(0)},\ldots,\lambda^{(k-1)})$ and $k$-core $\kappa$. The \dfn{abacus bijection} is a bijection $\SW\colon\SRT_k(\lambda/\kappa)\to\TYT_k(\lambda^{(0)},\ldots,\lambda^{(k-1)})$. To describe it, let us start with a $k$-ribbon tableau $(T_1,\ldots,T_m)\in\SRT_k(\lambda/\kappa)$. The intuitive idea is to remove the tiles $T_m,\ldots,T_1$ in this order and insert the number $r$ into one of the boxes of one of $\lambda^{(0)},\ldots,\lambda^{(k-1)}$ at the point in time when we remove $T_r$. In order to be rigorous, it is convenient to define the bijection recursively.
Thus, let us assume that we have already defined the abacus bijection on $\SRT_k(\mu/\kappa)$ whenever $|\mu/\kappa|<|\lambda/\kappa|$.

Let $\ell\in\mathbb Z$ and $j\in\{0,\ldots,k-1\}$ be such that $k\ell+j$ is the real part of the leftmost point in the $k$-ribbon tile $T_m$; the real part of the rightmost point in $T_m$ must be $k(\ell+1)+j+1$. There are $k+1$ steps in the northern boundary of the Young diagram of $\lambda$ that are part of the $k$-ribbon tile $T_m$; they are encoded by the symbols $w_\lambda(k\ell+j+1/2),\ldots,w_\lambda(k(\ell+1)+j+1/2)$. It is straightforward to see that $w_\lambda(k\ell+j+1/2)=\circ$ and $w_\lambda(k(\ell+1)+j+1/2)=\bullet$. Let $\mu$ be the partition whose Young diagram is obtained from that of $\lambda$ by removing $T_m$.
Then $w_\mu$ agrees with $w_\lambda$ except that $w_\mu(k\ell+j+1/2)=\bullet$ and $w_\mu(k(\ell+1)+j+1/2)=\circ$.
The $k$-charges of $\mu$ are the same as the $k$-charges $c_0,\ldots,c_{k-1}$ of $\lambda$. Indeed, if we replace $\lambda$ by $\mu$, the quantities $|\{\ell<0: w_\lambda^{(j)}(\ell-c_j+1/2)=\circ\}|$ and $|\{\ell\geq0: w_\lambda^{(j)}(\ell-c_j+1/2)=\bullet\}|$ will not change if $\ell\neq -1$ and will both decrease by $1$ if $\ell=-1$; more generally, adding or removing $k$-ribbons from the Young diagram of a partition $\nu$ to yield a new Young diagram of a partition $\nu'$ does not change the $k$-charges.
This implies $\lambda$, $\mu$, and $\kappa$ have the same $k$-charges.
It follows that $\lambda^{(r)}=\mu^{(r)}$ for all $r\neq j$ and that the Young diagram of $\mu^{(j)}$ is obtained from that of $\lambda^{(j)}$ by removing a single box $B$.
Note that the real parts of the leftmost and rightmost points in $B$ are $\ell-c_j$ and $\ell-c_j+2$, respectively.
Since $(T_1,\ldots,T_{m-1})\in\SRT_k(\mu/\kappa)$ and $|\mu/\kappa|<|\lambda/\kappa|$, we can apply the abacus bijection inductively to obtain a $k$-tuple Young tableau $\SW(T_1,\ldots,T_{m-1})\in\TYT_k(\mu^{(0)},\ldots,\mu^{(k-1)})$.
The $k$-tuple Young tableau $\SW(T_1,\ldots,T_m)\in\TYT_k(\lambda^{(0)},\ldots,\lambda^{(k-1)})$ is now obtained from $\SW(T_1,\ldots,T_{m-1})$ by adding the box $B$ and filling it with the number $m$.

\begin{example}
Let $k=3$ and $\lambda=(5,5,3,3,2)$, as in \cref{Fig5}.
We saw above that $\lambda^{(0)}=(1)$, $\lambda^{(1)}=(3)$, and $\lambda^{(2)}=\emptyset$. 
The $3$-core of $\lambda$ is $\kappa=(4,2)$. One standard $3$-ribbon tableau $(T_1,T_2,T_3,T_4)$ of shape $\lambda/\kappa$ is shown at the top of \cref{Fig7}; the corresponding $3$-tuple Young tableau $\SW(T_1,T_2,T_3,T_4)$ appears at the bottom of the same figure. To illustrate the recursive description of the abacus bijection, let $\mu$ be the partition obtained by removing $T_4$ from $\lambda$. Then $w_\lambda=\cdots\bullet\bullet\bullet\circ\circ\bullet\bullet\circ\circ\bullet\circ\bullet\bullet\circ\circ\circ\cdots$ and $w_\mu=\cdots\bullet\bullet\bullet\bullet\circ\bullet\circ\circ\circ\bullet\circ\bullet\bullet\circ\circ\circ\cdots$; the latter is obtained from the former by changing one $\circ$ to a $\bullet$ and changing one $\bullet$ to a $\circ$.
Preserving notation from above, we have $\ell=-2$ and $j=1$. 
\end{example}

\begin{figure}[ht]
  \begin{center}{\includegraphics[height=3cm]{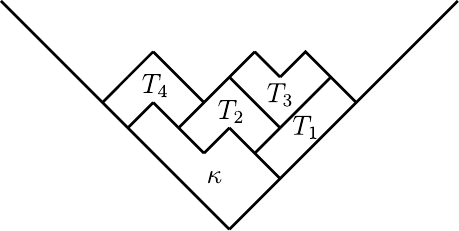}\\\vspace{.5cm}
  \includegraphics[height=1.677cm]{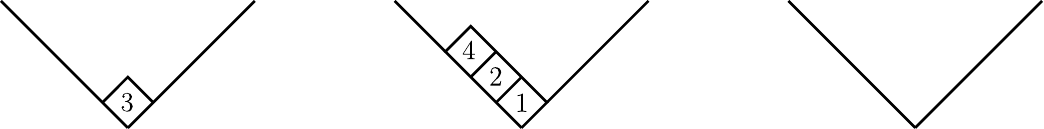}}
  \end{center}
  \caption{An example of the abacus bijection for $\lambda=(5,5,3,3,2)$. At top is a $3$-ribbon tableau of shape $\lambda$; at bottom is the corresponding $3$-tuple Young tableau of shape $((1),(3),\emptyset)$.}
  \label{Fig7}
\end{figure}

To prove that the map $\SW\colon\SRT_k(\lambda/\kappa)\to\TYT_k(\lambda^{(0)},\ldots,\lambda^{(k-1)})$ is bijective, we construct its inverse by reversing the above procedure. Suppose we start with a $k$-tuple Young tableau $(\mathcal T^{(0)},\ldots,\mathcal T^{(k-1)})$ of shape $(\lambda^{(0)},\ldots,\lambda^{(k-1)})$. Let $m=|\lambda^{(0)}|+\cdots+|\lambda^{(k-1)}|$, and let $j\in\{0,\ldots,k-1\}$ be such that $m\in\cont(\mathcal T^{(j)})$. Let $B$ be the box of $\lambda^{(j)}$ containing $m$ in $\mathcal T^{(j)}$.
Let $c_0,\ldots,c_{k-1}$ be the $k$-charges of $\lambda$, and define $\ell$ so that $\ell-c_j$ is the real part of the leftmost point in $B$.
We can reconstruct the partition $\mu^{(j)}$ by removing $B$ from $\lambda^{(j)}$.
We can then reconstruct $\mu$, which is determined by its $k$-quotient $(\mu^{(0)},\ldots,\mu^{(k-1)})$ and $k$-charges $c_0,\ldots,c_{k-1}$ (it has the same $k$-charges as $\lambda$), by letting $\mu^{(r)}=\lambda^{(r)}$ for all $r\neq j$.
Let $\widetilde{\mathcal T}^{(j)}$ be the strict Young tableau of shape $\mu^{(j)}$ obtained by deleting $B$ from $\mathcal T^{(j)}$, and let $\widetilde{\mathcal T}^{(r)}=\mathcal T^{(r)}$ for all $r\neq j$. Then $(\widetilde{\mathcal T}^{(0)},\ldots,\widetilde{\mathcal T}^{(k-1)})$ is a $k$-tuple Young tableau of shape $(\mu^{(0)},\ldots,\mu^{(k-1)})$. As before, the words $w_\lambda$ and $w_\mu$ agree except that $(w_\lambda(k\ell+j+1/2),w_\mu(k\ell+j+1/2))=(\circ,\bullet)$ and $(w_\lambda(k(
\ell+1)+j+1/2),w_\mu(k(\ell+1)+j+1/2))=(\bullet,\circ)$. This implies that $\lambda$ is obtained from $\mu$ by adding a $k$-ribbon tile $T_m$. Hence, $\mu$ and $\lambda$ have the same $k$-core $\kappa$.
We may assume inductively that $\SW\colon\SRT_k(\mu/\kappa)\to\TYT_k(\mu^{(0)},\ldots,\mu^{(k-1)})$ is a bijection, so we can re-obtain the $k$-ribbon tableau $(T_1,\ldots,T_{m-1})\in\SRT_k(\mu/\kappa)$ as $\SW^{-1}(\widetilde{\mathcal T}^{(0)},\ldots,\widetilde{\mathcal T}^{(k-1)})$.
Then $\SW^{-1}(\mathcal T^{(0)},\ldots,\mathcal T^{(k-1)})$ is the $k$-ribbon tableau $(T_1,\ldots,T_m)$.

Finally, we address the base case $|\lambda/\kappa|=0$, i.e., $\lambda=\kappa$.
This implies $\SRT_k(\lambda/\kappa)=\{\emptyset\}$.
If any of $\kappa^{(0)},\ldots,\kappa^{(k-1)}$ are nonempty, then applying our inverse map to remove a box implies that a $k$-ribbon can be removed from $\kappa$ to yield a smaller partition $\kappa'$, contradicting the definition of the $k$-core.
Thus $\kappa^{(0)}=\cdots=\kappa^{(k-1)}=\emptyset$, and $\TYT_k(\lambda^{(0)},\ldots,\lambda^{(k-1)})=\{(\emptyset,\ldots,\emptyset)\}$.
We obtain a trivial bijection $\SW\colon\SRT_k(\lambda/\kappa)\to\TYT_k(\lambda^{(0)},\ldots,\lambda^{(k-1)})$.

This proves the following theorem. 

\begin{theorem}[\cite{JamesKerber}]
Let $\lambda$ be an integer partition with $k$-quotient $(\lambda^{(0)},\ldots,\lambda^{(k-1)})$ and $k$-core $\kappa$.
The map $\SW\colon\SRT_k(\lambda/\kappa)\to\TYT_k(\lambda^{(0)},\ldots,\lambda^{(2)})$ defined above is a bijection. 
\end{theorem}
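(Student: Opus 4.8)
The plan is to prove the statement by induction on $N:=|\lambda/\kappa|$, essentially by checking that the recursive construction of $\SW$ and of the map described afterward as its inverse are both well defined and are mutually inverse block by block. The base case $N=0$, i.e.\ $\lambda=\kappa$, is already handled in the discussion above: $\SRT_k(\lambda/\kappa)$ is the singleton $\{\emptyset\}$, and $\TYT_k(\lambda^{(0)},\dots,\lambda^{(k-1)})$ is the singleton $\{(\emptyset,\dots,\emptyset)\}$ because each $\kappa^{(j)}$ must be empty --- otherwise applying the inverse construction to delete one of its boxes would exhibit a $k$-ribbon removable from $\kappa$, contradicting the minimality defining the $k$-core. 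So assume $N\ge 1$ and that the theorem holds for every skew shape of size strictly less than $N$.

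The crux is a purely combinatorial lemma about the abacus word: the assignment $T\mapsto(j,B)$ extracted above is a bijection from the set of $k$-ribbon tiles $T$ sitting at the top of $\lambda$ whose removal leaves a Young diagram, onto the set of pairs $(j,B)$ with $0\le j\le k-1$ and $B$ an outer (removable) corner box of $\lambda^{(j)}$. Concretely, removing such a $T$ is the same as selecting the indices $\ell,j$ for which the relevant $k+1$ border steps begin with $w_\lambda(k\ell+j+1/2)=\circ$ and end with $w_\lambda(k(\ell+1)+j+1/2)=\bullet$, swapping these two symbols, and noticing that this swap is exactly the interchange of an adjacent pair $\circ\bullet\mapsto\bullet\circ$ within the sub-word $w^{(j)}_\lambda$, i.e.\ the deletion of an outer corner $B$ of $\lambda^{(j)}$ (with horizontal coordinates $\ell-c_j$ and $\ell-c_j+2$). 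Proving the lemma in both directions requires checking that (a) the data left over is again a genuine partition / Young diagram, and (b) the $k$-core is unchanged; the latter holds because the $k$-charges $c_0,\dots,c_{k-1}$ are invariant under adding or removing a single $k$-ribbon, which in turn follows by the bookkeeping sketched above on how the counts of $\circ$'s before position $0$ and $\bullet$'s after it change. Consequently, if $\mu:=\lambda\setminus T$ then $\mu/\kappa$ is a skew shape of size $N-1$ with the same $k$-core $\kappa$, and the $k$-quotient of $\mu$ is obtained from that of $\lambda$ by replacing $\lambda^{(j)}$ with $\lambda^{(j)}\setminus B$.

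Granting the lemma, the induction closes cleanly because both sides carry matching block decompositions. On the one side, $\SRT_k(\lambda/\kappa)=\bigsqcup_T\SRT_k((\lambda\setminus T)/\kappa)$, the union taken over removable top $k$-ribbons $T$, where $(T_1,\dots,T_m)$ with $T_m=T$ corresponds to $(T_1,\dots,T_{m-1})$. On the other side, $\TYT_k(\lambda^{(0)},\dots,\lambda^{(k-1)})=\bigsqcup_{(j,B)}\TYT_k(\lambda^{(0)},\dots,\lambda^{(j)}\setminus B,\dots,\lambda^{(k-1)})$, where a $k$-tuple Young tableau whose largest entry $m$ occupies box $B$ of its $j$-th component corresponds to the tuple obtained by deleting that box and its entry; this is a legitimate decomposition because the largest entry of a strict Young tableau always occupies an outer corner, and conversely inserting the new largest label at any outer corner of $\mu^{(j)}$ produces a valid strict Young tableau (in particular the strictly-increasing condition is preserved, since $m$ exceeds all labels already present and is placed at an addable corner). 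The lemma identifies the two indexing sets, and by the inductive hypothesis $\SW$ restricts to a bijection between corresponding blocks; its recursive definition is precisely ``apply that block bijection to $(T_1,\dots,T_{m-1})$, then adjoin $B$ with label $m$'', which matches the stated construction verbatim. Hence $\SW$ is a bijection, with inverse given block by block by the reverse procedure described above.

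I expect the main obstacle to be the combinatorial lemma, specifically pinning down the exact dictionary between removable top $k$-ribbons of $\lambda$ and outer corners of its $k$-quotient --- including the coordinate translation $k\ell+j\leftrightarrow\ell-c_j$ and the fact that the $\circ/\bullet$ swap really does correspond to adding or removing a $k$-ribbon on the Young-diagram side --- together with the invariance of the $k$-core (equivalently of the $k$-charges). Once that is in place, everything else is the routine verification that the two sides share the same recursive block structure.
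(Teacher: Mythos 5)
Your proposal is correct and follows essentially the same route as the paper: induction on $|\lambda/\kappa|$, with the key step being the correspondence between removable top $k$-ribbons of $\lambda$ and removable corner boxes of the components of its $k$-quotient (via the $\circ/\bullet$ swap in the abacus word and the invariance of the $k$-charges), together with the same base case. Your packaging of the argument as matching block decompositions of $\SRT_k(\lambda/\kappa)$ and $\TYT_k(\lambda^{(0)},\ldots,\lambda^{(k-1)})$ is just a slightly more explicit organization of the paper's recursive definition of $\SW$ and its inverse.
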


A $k$-ribbon tableau of shape $\lambda/\mu$ is a $k$-ribbon tiling of $\lambda/\mu$ together with a certain ordering of the tiles. If we are given a $k$-ribbon tiling, then there is at least one way to order the tiles to obtain a $k$-ribbon tableau. Indeed, we can define a partial order $\prec$ on the set of tiles by declaring that $T\prec T'$ whenever $T$ has a box that lies below one of the boxes in $T'$. Then the orderings of the tiles that yield $k$-ribbon tableaux are precisely the linear extensions of this partially ordered set. Our interest in this article is in unordered tilings, so our main motivation for discussing the abacus bijection comes from the following corollary. 

\begin{corollary}\label{Cor:TilingsExist}
Let $\lambda$ be an integer partition with $k$-quotient $(\lambda^{(0)},\ldots,\lambda^{(k-1)})$ and $k$-core $\kappa$. There exists a $k$-ribbon tiling of the Young diagram of $\lambda$ if and only if $\kappa$ is empty, and this occurs if and only if $|\lambda^{(0)}|+\cdots+|\lambda^{(k-1)}|=\frac{1}{k}|\lambda|$. 
\end{corollary}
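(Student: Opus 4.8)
The plan is to derive both equivalences directly from the abacus bijection just established, together with two facts noted during its construction: the $k$-core of a partition does not depend on the order in which $k$-ribbons are stripped off, and removing a single $k$-ribbon from a partition $\nu$ to obtain $\nu'$ changes exactly one component of the $k$-quotient, which loses precisely one box (the other components, and all the $k$-charges, being unchanged).

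First I would prove that a $k$-ribbon tiling of $\lambda$ exists if and only if $\kappa=\emptyset$. If $\kappa=\emptyset$, then $\SRT_k(\lambda/\kappa)=\SRT_k(\lambda/\emptyset)$, which by the abacus bijection is in bijection with $\TYT_k(\lambda^{(0)},\dots,\lambda^{(k-1)})$; this last set is nonempty, since one can fill each $\lambda^{(j)}$ with any standard Young tableau whose entries form a block of consecutive integers, the $k$ blocks being chosen to partition $[\,|\lambda^{(0)}|+\dots+|\lambda^{(k-1)}|\,]$. Forgetting the ordering of the tiles in any member of $\SRT_k(\lambda/\emptyset)$ yields a $k$-ribbon tiling of $\lambda$. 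Conversely, suppose $\lambda$ has a $k$-ribbon tiling. Choosing a tile $T$ that is maximal in the order $\prec$, the region $\lambda\setminus T$ is the Young diagram of a partition $\mu$ that is tiled by the remaining ribbons; since $\mu$ and $\lambda$ have the same $k$-core (by order-independence of ribbon stripping), induction on $|\lambda|$, with base case $\lambda=\emptyset$, gives $\kappa=\emptyset$.

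Next I would establish $|\lambda|=|\kappa|+k\bigl(|\lambda^{(0)}|+\dots+|\lambda^{(k-1)}|\bigr)$, which settles the second equivalence. Stripping $k$-ribbons from $\lambda$ down to $\kappa$ takes exactly $(|\lambda|-|\kappa|)/k$ steps, and by the structural fact above each step decreases the total size of the $k$-quotient by exactly $1$; since the $k$-quotient of $\kappa$ is empty (as shown in the base-case analysis of the abacus bijection), the number of steps is $|\lambda^{(0)}|+\dots+|\lambda^{(k-1)}|$, giving the identity. Hence $\kappa=\emptyset$ iff $|\kappa|=0$ iff $|\lambda^{(0)}|+\dots+|\lambda^{(k-1)}|=|\lambda|/k$. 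The only point I expect to require any care is the converse half of the first equivalence — checking that removing a $\prec$-maximal ribbon from a tiled region leaves a tiled region to which the inductive hypothesis applies, and invoking order-independence of the $k$-core correctly; everything else is immediate from the results already in hand, including the triviality that $\TYT_k$ of an arbitrary tuple of shapes is nonempty.
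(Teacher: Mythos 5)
Your proof is correct and follows essentially the same route as the paper's: both equivalences are derived from the abacus-bijection machinery, with the first reducing to the order-independence of the $k$-core and the second to the identity $|\lambda^{(0)}|+\cdots+|\lambda^{(k-1)}|=\frac{1}{k}(|\lambda|-|\kappa|)$. The paper is terser---it invokes the definition of the core as the smallest partition $\kappa$ with $\lambda/\kappa$ admitting a ribbon tableau, rather than your explicit induction and the nonemptiness of $\TYT_k$---but the substance is the same.
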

\begin{proof}
As discussed above, any given skew shape has a $k$-ribbon tableau if and only if it can be tiled by $k$-ribbons. The definition of the $k$-core $\kappa$ implies that it is the smallest partition contained in $\lambda$ such that there is a $k$-ribbon tableau of shape $\lambda/\kappa$. This shows that $\lambda$ can be tiled by $k$-ribbons if and only if $\kappa$ is empty. It follows from the above discussion of the abacus bijection that $|\lambda^{(0)}|+\cdots+|\lambda^{(k-1)}|=\frac{1}{k}(|\lambda|-|\kappa|)$, so $\kappa$ is empty if and only if $|\lambda^{(0)}|+\cdots+|\lambda^{(k-1)}|=\frac{1}{k}|\lambda|$. 
\end{proof}
\begin{remark}\label{remark:coreCharge}
The above discussion of the abacus bijection noted that the $k$-quotient of any $k$-core $\kappa$ is empty, so $\kappa$ is completely determined by the $k$-charges of $\lambda$, which are the same as the $k$-charges of $\kappa$.
In other words, given the $k$-charges of $\lambda$, we can construct its $k$-core $\kappa$ to be the unique partition with an empty $k$-quotient and the same charges as $\lambda$.
In particular, $\kappa$ is empty if and only if the $k$-charges are all zero.
\end{remark}

\section{The Compress Bijection}\label{SecCompress}
Suppose $\lambda$ is a partition with empty $k$-core whose $k$-quotient $(\lambda^{(0)},\dots,\lambda^{(k-1)})$ satisfies $\lambda^{(j)}=\emptyset$.
Let $\rho$ be the partition with empty $(k-1)$-core whose $(k-1)$-quotient is \[(\lambda^{(0)},\dots,\lambda^{(j-1)},\lambda^{(j+1)},\dots,\lambda^{(k-1)}).\]
We will define a bijection $\Compress$ from the set of $k$-ribbon tilings of $\lambda$ to the set of $(k-1)$-ribbon tilings of $\rho$.
This bijection is essentially the unordered tiling equivalent of using the abacus bijection to map a $k$-ribbon tableau to a $k$-tuple Young tableau, removing the empty $\lambda^{(j)}$ to obtain a $(k-1)$-tuple Young tableau, and then lifting via the inverse abacus bijection on $(k-1)$-ribbons back to a $(k-1)$-ribbon tableau.
However, without an ordering on our ribbons, a one-step description of our bijection exists, and it is simpler than this two-step process involving the abacus bijection.

Similar to how we removed vertical strips of the hexagonal grid that were traversed by horizontal edges and then compressed the plane to obtain the square grid in \cref{Sec:Square}, our bijection is obtained by removing the vertical strips of the square grid that correspond to $\lambda^{(k-1)}=\emptyset$ and then compressing the plane to remove the empty space.

Identifying $\lambda$ with its Young diagram, let us write $P_r=\lambda\cap\{z\in\mathbb C:r\leq \operatorname{Re}(z)< r+1\}$, where $\operatorname{Re}(z)$ denotes the real part of $z$.
The assumptions that $\lambda^{(j)}=\emptyset$ and that the $k$-charges of $\lambda$ are all zero (since its $k$-core is empty) guarantee that $P_r$ is a parallelogram whenever $r\equiv j\pmod k$; let us color these parallelograms green.
Imagine deleting these green parallelograms, dividing what remains into vertical bands, sliding the remaining pieces of $\lambda$ lying left of the imaginary axis to the southeast, and sliding the remaining pieces of $\lambda$ lying right of the imaginary axis to the southwest.
(Specifically, tile-pieces in the $i$-th band to the left of the imaginary axis slide $i$ steps southeast, while tile-pieces in the $i$-th band to the right of the imaginary axis slide $i$ steps southeast.)
See \cref{Fig8} for an example when $k=3$ and $j=1$. It is straightforward to check that the resulting shape will be the Young diagram of $\rho$.
The bijection $\Compress$ simply transfers the tiles in a $k$-ribbon tiling of $\lambda$ through this process so that they become $(k-1)$-ribbons that tile $\rho$. It is not difficult to check that if we start with a $(k-1)$-ribbon tiling $\mathcal D$ of $\rho$, then there is a unique way to lift it to a $k$-ribbon tiling $\mathcal T$ of $\lambda$ such that $\Compress(\mathcal T)=\mathcal D$; every $(k-1)$-ribbon has a unique square that will be extended into two squares, where the direction of extension depends on which side of the imaginary axis the square is on, yielding a $k$-ribbon.
Hence, we only need to check that $\Compress$ is well-defined; more precisely, we need to show that the remove-green compression process actually sends all of the tiles in a $k$-ribbon tiling of $\lambda$ to $(k-1)$-ribbons in $\rho$ (instead of disconnected shapes).

\begin{figure}[ht]
  \begin{center}\includegraphics[height=3cm]{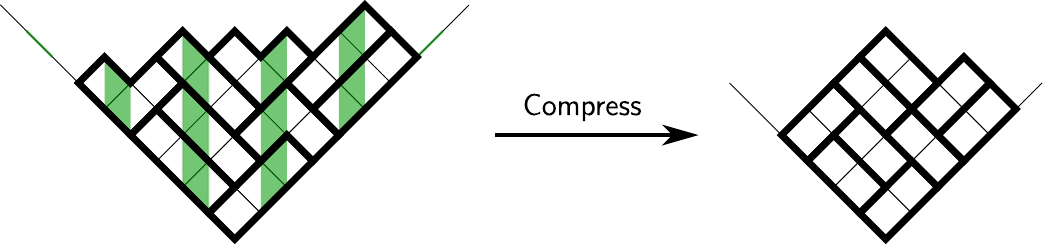}\end{center}
  \caption{Applying the bijection $\Compress$ to the $3$-ribbon tiling of the partition $\lambda$ on the left yields the $2$-ribbon tiling of the partition $\rho$ on the right. We have also drawn the green parallelograms in $\lambda$.}
  \label{Fig8}
\end{figure}

Let us fix a $k$-ribbon tiling $\mathcal T$ of $\lambda$.
Suppose we order the tiles in $\mathcal T$ to form a $k$-ribbon tableau.
When we apply the abacus bijection to this tableau, each tile will correspond to one of the boxes in the resulting $k$-tuple Young tableau.
More precisely, a tile $T$ will correspond to one of the boxes in the Young tableau of shape $\lambda^{(r)}$, where $r\in\{0,1,\dots,k-1\}$ is congruent modulo $k$ to the real part of the leftmost point in $T$.
Since $\lambda^{(j)}=\emptyset$, we cannot have $r=j$.
Equivalently, $T$ cannot intersect two of the green parallelograms.
It must intersect at least one of the green parallelograms because the real parts of the leftmost and rightmost points of $T$ are $k$ apart and because the horizontal distance between two consecutive green parallelograms is only $k-1$.
This proves the following lemma. 

\begin{lemma}\label{lem:no_green_to_green}
Let $\lambda$ be a partition with empty $k$-core and with $k$-quotient $(\lambda^{(0)},\dots,\lambda^{(k-1)})$ satisfying $\lambda^{(j)}=\emptyset$.
If $\mathcal T$ is a $k$-ribbon tiling of $\lambda$, then each tile in $\mathcal T$ intersects a unique green parallelogram. 
\end{lemma}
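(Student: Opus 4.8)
The plan is to split the statement into two halves: every tile of $\mathcal T$ meets \emph{at least} one green parallelogram, which is a purely geometric fact about $k$-ribbons, and every tile meets \emph{at most} one, which is where the hypothesis $\lambda^{(j)}=\emptyset$ and the abacus bijection do the real work.

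For the geometry, note that any $k$-ribbon $T$ consists of $k$ boxes with pairwise distinct $x$-coordinates, and since $T$ is connected (and adjacent boxes in our grid have centers differing by $(\pm1,\pm1)$) these $x$-coordinates are $k$ consecutive integers, say $c,c+1,\dots,c+k-1$. The box centered on the line $\operatorname{Re}(z)=c+t$ occupies real parts in $[c+t-1,c+t+1]$, so $T$ meets, with positive area, exactly the $k+1$ consecutive unit-width columns $\{z:m\leq\operatorname{Re}(z)<m+1\}$ for $m\in\{c-1,c,\dots,c+k-1\}$, the leftmost of which, $m=c-1$, is the column containing the leftmost point of $T$. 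The green parallelograms are by definition the columns with $m\equiv j\pmod k$, so they form a $k$-periodic family, and since $T$ spans $k+1>k$ consecutive columns it meets at least one. Moreover, among $k+1$ consecutive integers exactly one is congruent to $j$ modulo $k$ \emph{unless the first one is}, in which case there are exactly two (the first and the last). Hence $T$ meets exactly one green parallelogram unless the real part of the leftmost point of $T$ is itself $\equiv j\pmod k$.

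It remains to exclude that last possibility, and this is exactly what $\lambda^{(j)}=\emptyset$ gives. Given the tiling $\mathcal T$ (here $\kappa=\emptyset$ by hypothesis), choose any linear extension of the order $\prec$ on its tiles to obtain a $k$-ribbon tableau $(T_1,\dots,T_m)\in\SRT_k(\lambda/\kappa)$, and apply the abacus bijection to get $\SW(T_1,\dots,T_m)=(\mathcal T^{(0)},\dots,\mathcal T^{(k-1)})\in\TYT_k(\lambda^{(0)},\dots,\lambda^{(k-1)})$. Each tile corresponds to exactly one box of exactly one $\mathcal T^{(r)}$, and unwinding the recursive construction of $\SW$ (a straightforward induction on the number of tiles, using that removing a $k$-ribbon preserves the $k$-charges and hence the indexing of the quotient components) shows that the box coming from a tile $T_i$ lies in the component indexed by the residue modulo $k$ of the real part of the leftmost point of $T_i$; this residue is intrinsic to the tile, independent of the chosen ordering. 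Since $\lambda^{(j)}=\emptyset$, the tableau $\mathcal T^{(j)}$ is empty, so no tile of $\mathcal T$ has a leftmost point with real part $\equiv j\pmod k$. Combined with the previous paragraph, every tile of $\mathcal T$ meets exactly one green parallelogram.

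The step I expect to require the most care is pinning down the meaning of ``intersects'' so that the column count is exactly $k+1$: a $k$-ribbon does touch the column immediately to its right at its single rightmost vertex, so reading ``intersects'' as bare nonempty set intersection would throw the count off by one precisely when that vertex lands in a green column. The remedy is to interpret ``$T$ intersects a green parallelogram'' as ``$T$ has a box overlapping that column'' (equivalently, positive-area intersection), which is also exactly the notion relevant to the remove-the-green-columns-and-compress operation underlying $\Compress$. Everything else reduces either to elementary structural facts about $k$-ribbons or to a direct appeal to the already-established abacus bijection together with $\lambda^{(j)}=\emptyset$.
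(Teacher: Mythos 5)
Your proof is correct and follows essentially the same route as the paper's: the ``at most one'' half comes from the abacus bijection assigning each tile to the quotient component indexed by the residue of its leftmost point (which must avoid $j$ since $\lambda^{(j)}=\emptyset$), and the ``at least one'' half from comparing the tile's horizontal span to the spacing of the green columns. Your version is somewhat more careful than the paper's --- in particular the explicit count of columns met and the positive-area reading of ``intersects'' --- but the underlying argument is identical.
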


Now choose a fixed green parallelogram $P=P_r$ in $\lambda$.
The side lengths of $P$ are $\sqrt 2$ and $2\alpha$ for some nonnegative integer $\alpha$.
We can break $P$ into $\alpha$ smaller parallelograms whose side lengths are $\sqrt 2$ and $2$; say these smaller parallelograms are $P^{1},\ldots,P^\alpha$, listed from bottom to top.
Let $L^s$ and $R^s$ be the left and right vertical sides of $P^s$, respectively.
Each tile in $\mathcal T$ that intersects $P$ must contain exactly one of $L^1,\ldots,L^\alpha$ and exactly one of $R^1,\ldots,R^\alpha$.
It follows that for each $1\leq s\leq \alpha$, there is a tile $T^s$ that contains both $L^s$ and $R^s$.
When we apply the compression process removing the green parallelograms, the piece of $T^s$ to the left of $P$ will connect with the piece of $T^s$ to the right of $P$ to form a $(k-1)$-ribbon, as desired.
See \cref{Fig11} for an illustration when $k=3$. 

\begin{figure}[ht]
  \begin{center}
  \includegraphics[height=5cm]{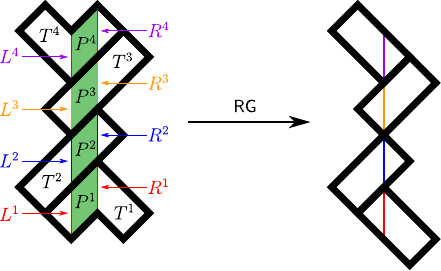}
\end{center}
  \caption{The remove-green compression process turns $k$-ribbons that intersect the green parallelogram into $(k-1)$-ribbons.}
  \label{Fig11}
\end{figure}

\begin{remark}\label{rem:correct_slope}
Suppose $T$ is a tile in a $k$-ribbon tiling $\mathcal T$ of $\lambda$. By \cref{lem:no_green_to_green}, $T$ intersects a unique green parallelogram $P$. In $P$ appears to the left (respectively, right) of the imaginary axis, then it follows from the preceding discussion that the line segments in the the boundary of $T$ that intersect $P$ must have negative (respectively, positive) slope. \Cref{lem:no_green_to_green} and the present remark encompass the main properties of the $\mathsf{Compress}$ bijection that we will use in \Cref{SecMountainless,SecValleyless}. 
\end{remark}

\begin{remark}\label{remark:remove_different_colors}
The preceding discussion implies that the Young diagrams with $k$-quotients \[(\mu^{(0)},\mu^{(1)},\dots,\mu^{(k-2)},\emptyset), (\mu^{(0)},\dots,\mu^{(k-3)},\emptyset,\mu^{(k-2)}), \ldots, (\emptyset,\mu^{(0)},\dots,\mu^{(k-2)})\] with vanishing $k$-charges all have the same number of $k$-ribbon tilings, as each such set of tilings is in bijection with the set of $(k-1)$-ribbon tilings of the Young diagram with $(k-1)$-quotient $(\mu^{(0)},\dots,\mu^{(k-2)})$ and vanishing $(k-1)$-charges.
\end{remark}

\begin{remark}\label{remark:remove_multiple_colors}
The $\Compress$ bijection can be generalized to remove multiple empty $k$-quotients simultaneously.
Removing $k$-quotients $\lambda^{(j_1)}=\cdots=\lambda^{(j_n)}=\emptyset$, we simply remove all $P_r$ with $r\equiv j_\ell \pmod{k}$ for some $\ell$ and then slide the remaining pieces of $\lambda$ together accordingly.
This is equivalent to composing $n$ $\Compress$ bijections together, each time removing one of the empty parts of the quotient.
\end{remark}

The relationship between the abacus bijection and the compress bijection is summarized in the following commutative diagram: 
\[
\includegraphics[width=\linewidth]{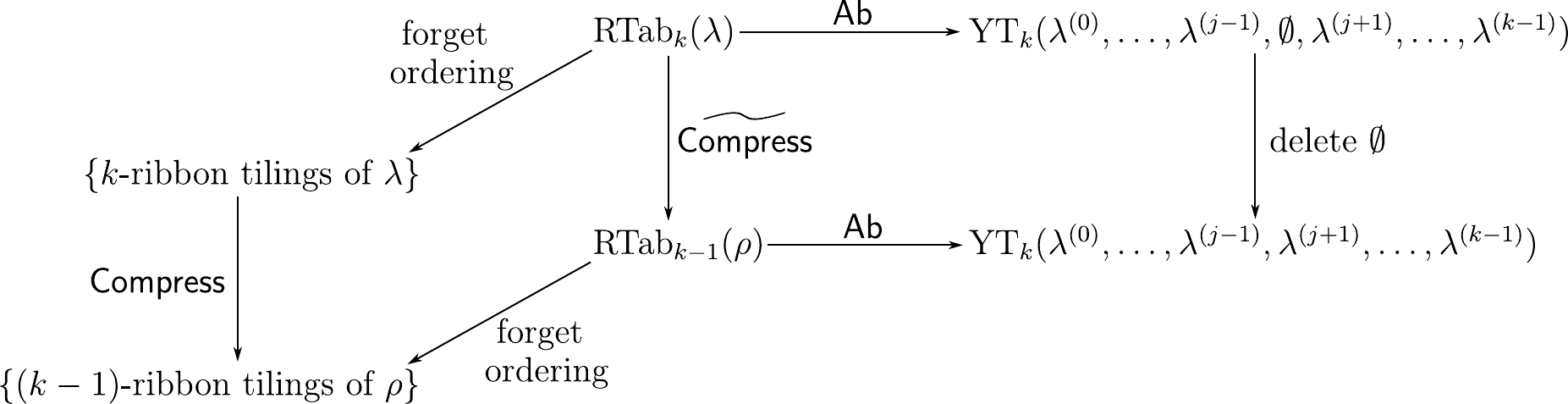}
\]
The map from $\SRT_k(\lambda)$ (respectively, $\SRT_{k-1}(\rho)$) to the set of $k$-ribbon tilings of $\lambda$ (respectively, $(k-1)$-ribbon tilings of $\rho$) simply forgets the ordering of the tiles.
The map $\widetilde{\mathsf{Compress}}$ acts in the same way as the map $\Compress$, except it preserves the ordering of the tiles as it compresses $k$-ribbon tiles into $(k-1)$-ribbon tiles.

\section{Tilings of Benzels Using Only Right Stones}\label{Sec1mod3}
When $a+b\equiv1\pmod{3}$, we can use the Conway--Lagarias invariant to see that in any stones-and-bones tiling of the $(a,b)$-benzel (allowing all five types of stones and bones), the benzel must be tiled entirely by right stones. In this section, we prove \cref{theorem:1mod3}, which states that such a tiling exists and is unique. 

\begin{proof}[Proof of \cref{theorem:1mod3}]
The $(2,2)$-benzel consists of three hexagonal cells in the shape of a right stone, so the theorem is obvious when $a=b=2$. Thus, we may assume $a+b>4$ and proceed by induction on $a+b$. By the discussion above, we simply need to show that there is only one way to tile the $(a,b)$-benzel with right stones. 
The $(a,b)$-benzel and the $(b,a)$-benzel are related by a reflection across the real axis, and this reflection preserves right stones. Therefore, it suffices to consider the case when $a\leq b$.  

We illustrate the argument in \cref{Fig17}. The bottom side of the bounding hexagon of the $(a,b)$-benzel touches $\frac{2a-b+1}{3}$ of the hexagonal cells of the benzel. Any tiling of the $(a,b)$-benzel with right stones must cover these $\frac{2a-b+1}{3}$ cells with distinct right stones, which are aligned in a row. Since benzels and stones have $120^\circ$-rotational symmetry, the same argument shows that the upper left side and the upper right side of the benzel must also each be tiled with a line of $\frac{2a-b+1}{3}$ right stones. Removing these three lines of right stones results in the $(a,b-3)$-benzel, which we know by induction has a unique tiling by right stones. 
\end{proof}

\begin{figure}[ht]
  \begin{center}
 \includegraphics[height=5.274cm]{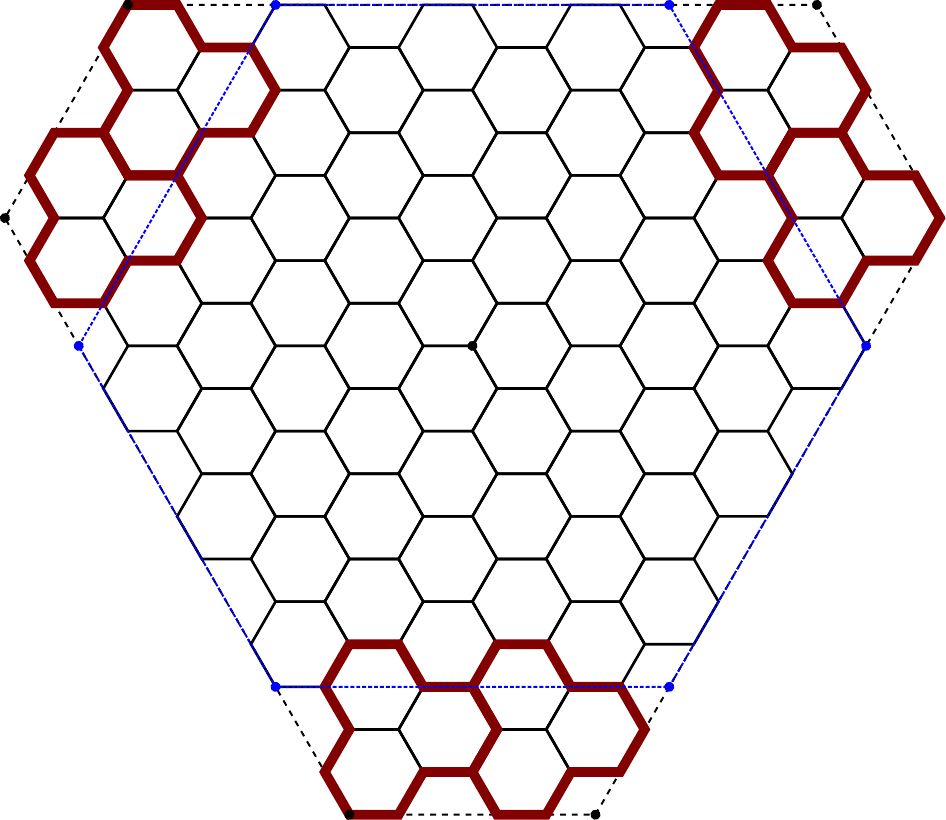} 
  \end{center}
  \caption{The $(8,11)$-benzel. We have indicated the three lines of right stones appearing on the bottom and in the upper left and upper right corners of the benzel; each line contains $\frac{2\cdot 8-11+1}{3}=2$ right stones.  Removing these lines of right stones results in the $(8,8)$-benzel, as indicated by the blue hexagon. }\label{Fig17}
\end{figure}

This immediately implies that for any tiling problem where only certain specified prototiles are allowed, there is a unique tiling of the $(a,b)$-benzel for $a+b\equiv1\pmod{3}$ if and only if the right stone is an allowed prototile, and no tilings otherwise.

\begin{remark}\label{remark:boundaryCase}
Recall that the parameters $a,b$ of an $(a,b)$-benzel must satisfy $2 \leq a \leq 2b$ and $2 \leq b \leq 2a$.
For the three residues of $a+b$ modulo 3, the boundary cases are the $(n,2n-2)$-benzel, the $(n,2n-1)$-benzel, and the $(n,2n)$-benzel for $n\geq2$.
For each $n$, these three benzels coincide; as \cref{theorem:1mod3} applies to the $(n,2n-2)$-benzel, it also implies that the $(n,2n-1)$-benzel and $(n,2n)$-benzel each have a unique stones-and-bones tiling, which consists entirely of right stones.
\end{remark}

\begin{remark}\label{remark:phases}
It should be noted that all the stones
in the unique tiling constructed above are in phase with each other.
That is, if we 3-color the cells in the hexagonal grid so that no two adjoining hexagons have the same color (which can be done in an essentially unique way), then all the stones in the tiling constructed above exhibit the same color-pattern.
Putting it differently, the tiling of the benzel extends to a periodic tiling of the plane by right stones whose fundamental domain can be chosen to be a single tile.
\end{remark}

\section{Two Bones and the Left Stone}\label{SecMountainless}

The goal of this section is to enumerate tilings of the $(a,b)$-benzel using two types of bones along with the left stone. Without loss of generality, we may assume the vertical bone is forbidden. 
Thus, our goal is to prove \cref{thm:mountainless}, which provides the enumeration of tilings using rising bones, falling bones, and left stones. It follows immediately from \cref{theorem:1mod3} that there are no such tilings when $a+b\equiv1\pmod{3}$ (since the only stones-and-bones tiling of the $(a,b)$-benzel uses only right stones). If $a+b\equiv2\pmod{3}$, the Conway--Lagarias invariant of the $(a,b)$-benzel is
\[ \frac{1}{6}(3a^2-6ab+3b^2+a+b-2) = \frac{1}{6}\left(3(a-b)^2+a+b-2\right) \geq \frac{a+b-2}{6} \geq \frac{1}{3} > 0. \]
Therefore, in this case, a stones-and-bones tiling of the $(a,b)$-benzel requires a strictly positive number of right stones, so again no such tiling exists. This completes the proof of \cref{thm:mountainless} when $a+b\not\equiv 0\pmod 3$, so we may assume throughout the rest of this section that $a+b\equiv 0\pmod 3$. 

After transferring the problem to the square grid as in \cref{Sec:Square}, we are left to consider tilings using valley stones, negative bones, and positive bones, i.e., the three prototiles other than the mountain stone.
Henceforth, we will refer to such tilings with these three allowed prototiles as \dfn{mountainless tilings}.

We will use the $\Compress$ bijection from \cref{SecCompress} with $k=3$, i.e., between certain 3-ribbon tilings and 2-ribbon tilings (also called \emph{domino tilings}) of smaller shapes. Note that there are two 2-ribbons, the 2-ribbon analogs of the positive and negative bones.
We will set $j=1$, so our bijection will be from 3-ribbon tilings of an integer partition $\lambda$ with 3-quotient $(\lambda^{(0)},\emptyset,\lambda^{(2)})$ and vanishing $3$-charges. As we will see later, the integer partition that ends up being relevant for our purposes is the partition $\lambda_n$ with 3-quotient $(\square_n,\emptyset,\square_n)$ and vanishing $3$-charges, where $\square_n=(n,n,\dots,n)$ is the integer partition consisting of $n$ parts of size $n$. The Young diagram of $\lambda_n$ consists of $n$ nested V-shapes $V_1,\ldots,V_n$, listed from top to bottom. For each $1\leq i\leq n-1$, we will assign the color red to the boundary between $V_i$ and $V_{i+1}$; we call this boundary a \dfn{red border}. See \cref{Fig9} for an example with $n=3$. Let us label the boxes in $\lambda_n$ with the letters A, B, C so that the label of a box is determined by reading the real part of its leftmost point modulo $3$, with A, B, C corresponding to $0$, $1$, $2$, respectively. 

\begin{figure}[ht]
  \begin{center}\includegraphics[height=3.888cm]{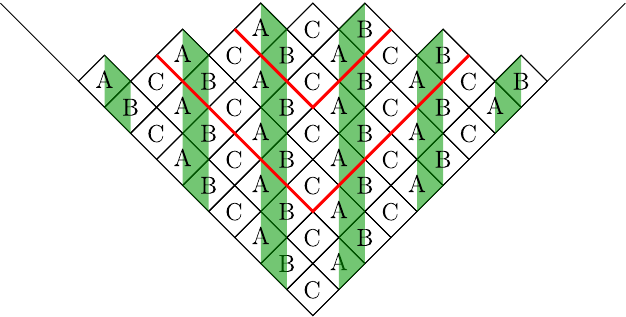} \end{center}
  \caption{The Young diagram of the partition $\lambda_3$ breaks up into three V-shaped regions $V_1,V_2,V_3$, which are separated by red borders. We have drawn the green parallelograms and labeled each box with A, B, or C.}
  \label{Fig9}
\end{figure}

\begin{lemma}\label{lem:valleys_C}
If $\mathcal T$ is a $3$-ribbon tiling of $\lambda_n$, then no bones in $\mathcal T$ cross any red borders. If $v$ is a valley stone in $\mathcal T$ that crosses a red border, then there is exactly one box in $v$ lying above that red border, and this one box has label $\mathrm{C}$. If $w$ is a mountain stone in $\mathcal T$ that crosses a red border, then there is exactly one box in $w$ lying below that red border, and this one box has label $\mathrm{C}$.
\end{lemma}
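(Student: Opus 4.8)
The plan is to analyze how a $3$-ribbon tile can sit across a red border by examining the local structure of $\lambda_n$ near that border. Recall that the boxes are labeled $\mathrm{A},\mathrm{B},\mathrm{C}$ by the real part of their leftmost point modulo $3$, and that the green parallelograms are exactly the vertical strips $P_r$ with $r\equiv 1\pmod 3$; these are the strips occupied by boxes with label $\mathrm{B}$, since by construction $\lambda_n^{(1)}=\emptyset$. By \Cref{lem:no_green_to_green}, every tile in $\mathcal T$ meets exactly one green strip, so every $3$-ribbon contains exactly one $\mathrm{B}$-box; consequently a $3$-ribbon, read from its leftmost box to its rightmost box, has label pattern either $(\mathrm{A},\mathrm{B},\mathrm{C})$ or $(\mathrm{C},\mathrm{A},\mathrm{B})$ depending on which side of the imaginary axis it sits, but in all cases a tile crossing a red border must do so in a way compatible with \Cref{rem:correct_slope}: the boundary segments of the tile meeting its green strip have negative slope on the left of the imaginary axis and positive slope on the right.

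First I would make precise the geometry of a red border in $\lambda_n$: the border between $V_i$ and $V_{i+1}$ is itself a $\vee$-shaped broken line, and I would describe, strip by strip, exactly which box of $V_i$ lies directly above which box of $V_{i+1}$. The key combinatorial input is that consecutive V-shapes $V_i, V_{i+1}$ are offset so that the red border passes through the $\mathrm{B}$-strips (green strips) at a \emph{corner} of the $\vee$, and through the $\mathrm{A}$- and $\mathrm{C}$-strips along \emph{edges}. In particular, in the two green strips flanking the bottom corner of the $\vee$, there is no box of $V_i$ sitting above a box of $V_{i+1}$, whereas in the $\mathrm{C}$-strip at the corner, a single box of $V_i$ sits above a single box of $V_{i+1}$; symmetric statements hold further out along the arms. (I would record this in a short lemma or inline computation, possibly with reference to \Cref{Fig9}.)

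Next I would run the case analysis. A tile $T$ crossing a given red border must contain boxes both above and below it, and since $T$ has exactly one box in each of three consecutive strips, $T$ occupies three consecutive strips straddling the border. Using the geometry just described together with the slope constraint of \Cref{rem:correct_slope} and the requirement that $T$ be a genuine ribbon (connected, no two boxes in the same strip) tiling a Young diagram, I would enumerate the possible placements. For a bone (a $3$-ribbon that is a straight segment of boxes stepping monotonically in one diagonal direction), the monotonicity forces all three boxes to the same side of the $\vee$-shaped red border, contradicting that it crosses; this gives the first assertion. For a valley stone $v$, the ribbon turns exactly once, and the only placements consistent with the $\vee$-shape, the slope constraint, and Young-diagram-validity are those in which precisely the single $\mathrm{C}$-box of $v$ lies above the red border and the other two boxes lie below; symmetrically, reflecting the argument (or invoking the mountain/valley reflection together with the A$\leftrightarrow$C symmetry of the corner), a mountain stone must have exactly its $\mathrm{C}$-box below the border.

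The main obstacle I anticipate is bookkeeping the local geometry cleanly: one must pin down exactly how $V_i$ and $V_{i+1}$ are aligned — equivalently, the precise indices $r$ at which $P_r\cap V_i$ lies directly above $P_r\cap V_{i+1}$ — and this requires being careful about the offsets coming from the $3$-quotient $(\square_n,\emptyset,\square_n)$ and the vanishing $3$-charges. Once that alignment is nailed down, the case analysis is short and forced, so I would invest the effort in stating the alignment lemma crisply (perhaps phrased in terms of the abacus words $w_{\lambda_n}$) and then let the ribbon-shape and slope constraints do the rest.
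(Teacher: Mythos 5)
Your overall strategy---combining \cref{lem:no_green_to_green} and \cref{rem:correct_slope} with the local geometry of the chevrons and a case analysis on the tile shapes---is exactly the paper's, and your final conclusions match the lemma. However, the ``alignment lemma'' on which you propose to hang the case analysis is false as stated, and since everything downstream is supposed to be read off from it, this is a genuine gap rather than mere bookkeeping. Each red border is a perfect $\vee$ whose corner lies on the imaginary axis, i.e.\ between the strips $P_{-1}$ and $P_0$, and neither of these is green (the green strips are the $P_r$ with $r\equiv 1\pmod 3$, so the nearest ones are $P_{-2}$ and $P_1$, which the border crosses along straight segments of its arms, \emph{not} at its corner). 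It is also not true that no box of $V_i$ sits above a box of $V_{i+1}$ inside a green strip: the border segment inside $P_{-2}$ is a shared edge between a $\mathrm{B}$-labelled box of $V_i$ above it and an $\mathrm{A}$-labelled box of $V_{i+1}$ below it. Relatedly, the label pattern of a tile is $(\mathrm{A},\mathrm{B},\mathrm{C})$ or $(\mathrm{C},\mathrm{A},\mathrm{B})$ according to the residue of its leftmost point, not according to which side of the imaginary axis it sits on (both patterns occur on both sides, e.g.\ for bones); what the side of the axis controls, via \cref{rem:correct_slope}, is the slope of the tile's boundary inside its green strip.

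The paper's proof needs no such strip-by-strip alignment: a stone's center box cannot be labelled $\mathrm{C}$ (else its leftmost box is labelled $\mathrm{B}$ and the tile meets two green parallelograms), and the slope of the stone's boundary inside its green strip then forces the stone onto one specific side of the imaginary axis according to whether the center is labelled $\mathrm{A}$ or $\mathrm{B}$; on that side the two boxes aligned with the arm direction lie in one chevron and the single remaining box, which is the $\mathrm{C}$-box, lies in the adjacent one. Two further points. First, your bone argument needs the slope constraint and not just monotonicity: a negative bone to the right of the axis is perpendicular to the arms there and \emph{would} cross a border; only \cref{rem:correct_slope} rules it out. Second, the mountain case is not the mirror image of the valley case: there is an extra configuration with no valley analogue, namely a mountain stone whose top box is the bottom vertex box of a chevron (two boxes below the border rather than one). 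The paper excludes it separately by noting that such a stone meets two green parallelograms, contradicting \cref{lem:no_green_to_green}; your reflection argument would miss it.
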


\begin{proof}
By \cref{lem:no_green_to_green}, every tile must intersect a unique green parallelogram. Furthermore, \cref{rem:correct_slope} tells us that each tile must intersect its green parallelogram with the correct slope. This immediately implies that no bone may cross a red border. 

Now suppose there is a valley stone $v$ that crosses one of the red borders. Because this stone cannot intersect two green parallelograms, its center box must have label A or B. If the label is A, then the center box must appear to the right of the imaginary axis since, otherwise, the valley stone would cross the green parallelogram with the wrong slope (contradicting \cref{rem:correct_slope}). Similarly, if the center box has label B, then it must appear to the left of the imaginary axis. In either case, there is exactly one box in $v$ lying above the red border that $v$ crosses, and this one box must have label C. 

A completely analogous argument proves the last statement of the lemma concerning the mountain stone $w$, except for one new possibility that must be considered: a mountain stone whose top cell is the bottom cell of one of the V-shapes. However, this possibility can be eliminated by noting that such a mountain stone intersects two green parallelograms, one on each side, violating \cref{lem:no_green_to_green}.
\end{proof}

\begin{proposition}\label{prop:lambda_n_(2n)!!}
The number of mountainless tilings of $\lambda_n$ is $(2n)!!$.
\end{proposition}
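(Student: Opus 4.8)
The plan is to prove \cref{prop:lambda_n_(2n)!!} by induction on $n$, peeling off the outermost V-shape of $\lambda_n$ at each step. For the base case one takes $n=1$ (or $n=0$): $\lambda_1$ is the single V-shape $V_1$, which has six boxes, and a mountainless tiling of it consists of one $3$-ribbon associated with $\lambda^{(0)}$ and one associated with $\lambda^{(2)}$; by \cref{lem:no_green_to_green} and \cref{rem:correct_slope} each of these meets a unique green parallelogram with prescribed slope, which leaves only two possible shapes for each, and once mountain stones are excluded one finds exactly $2=(2\cdot1)!!$ mountainless tilings. It is worth recording at this point the general consequence of \cref{lem:no_green_to_green} and \cref{rem:correct_slope} for $\lambda_n$: every tile of a mountainless tiling is a positive bone (associated with $\lambda^{(0)}$ or $\lambda^{(2)}$) or a valley stone (associated with $\lambda^{(2)}$), and the $\mathrm A/\mathrm B/\mathrm C$ labels of the boxes of any one tile, read from left to right, are forced.

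For the inductive step, fix a mountainless tiling $\mathcal T$ of $\lambda_n$ and look at the outermost V-shape $V_1$, which has $6(2n-1)$ boxes. By \cref{lem:valleys_C}, no bone of $\mathcal T$ crosses the red border separating $V_1$ from $V_2$, and, because $\mathcal T$ uses no mountain stones, the only tiles crossing that red border are valley stones, each contributing a single box---necessarily labeled $\mathrm C$---to $V_1$ and its other two boxes to $V_2$. I would then argue that the restriction of $\mathcal T$ to $V_1$ is highly rigid: since $V_1$ is a bent strip whose two arms are each three boxes wide, the tiles covering the outer boundary of $V_1$ are forced in succession (much as in the proof of \cref{theorem:1mod3}), so that the entire configuration inside $V_1$ (the tiles interior to $V_1$ together with the protruding $\mathrm C$-tips) is determined by a small amount of ``interface data'' along the red border bounding $V_1$. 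The next step is to enumerate this interface data, showing there are exactly $2n$ possibilities---the discrete freedom being, roughly, how far the protruding valley stones extend along each of the two arms of the red border, constrained by the mod-$3$ area conditions that $V_1$ and $V_2$ impose. Finally I would check that for each of the $2n$ interface choices, deleting $V_1$ together with the protruding valley stones and compressing away the vacated strip yields the Young diagram $\lambda_{n-1}$---this identification being routine on the level of $3$-quotients, since $\square_n$ with its first row and column removed is $\square_{n-1}$---with the remaining tiles (each protruding valley stone having its $\mathrm C$-tip removed and its residual domino re-extended into a fresh $3$-ribbon, in the manner of the $\Compress$ construction) forming a valid mountainless tiling of $\lambda_{n-1}$; the inductive hypothesis then gives $2n\cdot(2n-2)!!=(2n)!!$.

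The step I expect to be the main obstacle is the rigidity-and-enumeration in $V_1$: proving that the width-$3$ bent strip, subject to the protrusion boundary condition, is tiled in essentially one way per interface state, and that precisely $2n$ interface states arise. This calls for a careful local case analysis that tracks the $\mathrm A/\mathrm B/\mathrm C$ labels, keeps track of which side of the imaginary axis each box lies on so that \cref{rem:correct_slope} applies, and handles the special geometry at the tip of the V and at the two outer ends of its arms. A secondary subtlety, needed to make the induction close, is that the protruding valley stones carve a notch into $V_2$; one must verify that after compression this notch is absorbed so that what remains is genuinely $\lambda_{n-1}$ (and not $\lambda_{n-1}$ with a prescribed partial tiling), or else strengthen the inductive statement accordingly.
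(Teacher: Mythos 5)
Your overall architecture---isolate the outermost V-shaped strip, extract a factor of $2n$, recurse---is close to the paper's, which treats all $n$ strips at once rather than inducting; but the proposal omits the one argument that makes any such decomposition work, and the ``interface'' it plans to analyze does not exist. The paper's key step is that \emph{no} tile of a mountainless tiling of $\lambda_n$ crosses any red border. Your proposed ``mod-$3$ area conditions'' cannot deliver this: if $c$ valley stones crossed the highest crossed border, area modulo $3$ only forces $c\equiv 0\pmod 3$. The paper instead counts the labels A, B, C separately: by \cref{lem:valleys_C}, each crossing valley stone consumes exactly one box of the inner region $X=V_1\cup\cdots\cup V_t$, and that box is labeled C, whereas every tile lying entirely inside $X$ consumes one box of each label; since $X$ contains equally many A's, B's, and C's, there can be no crossings at all. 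Without this, your reduction step fails: a protruding valley stone leaves $\lambda_{n-1}$ with boxes deleted, and no compression absorbs that (you flag the problem yourself, but neither proposed fix addresses it). Two further slips: your picture of a crossing valley stone is reversed relative to \cref{lem:valleys_C} (its \emph{single} box lies \emph{above} the border, i.e.\ in the inner region, with its other two boxes in the outer strip); and the outermost strip with $6(2n-1)$ boxes is $V_n$ in the paper's indexing, not $V_1$.

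Once crossings are excluded, the factor $2m$ for the strip $V_m$ has nothing to do with interface states; it is the number of admissible positions of the \emph{unique} valley stone inside $V_m$. The paper gets uniqueness cheaply: $V_m$ admits a tiling by one valley stone and bones, so its Conway--Lagarias invariant is $-1$, hence every mountainless tiling of $V_m$ contains exactly one valley stone; there are then $2m$ positions for it (centered at a B-labeled cell on the left half or an A-labeled cell on the right half), each extending uniquely to a tiling by bones. Your substitute---a tile-by-tile rigidity argument along the bent strip---is precisely the step you identify as the main obstacle, and the invariant argument sidesteps it. Finally, no $\Compress$/re-extension step is needed: with no crossings, $V_1\cup\cdots\cup V_{n-1}$ is literally a translate of $\lambda_{n-1}$, already carrying a mountainless tiling, so the recursion (or the direct product over $m$) closes immediately.
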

\begin{proof}
Consider a mountainless tiling of $\lambda_n$.
We claim that every tile must be contained in a single V-shaped region $V_m$; in other words, no tile can cross a red border. To see this, suppose by way of contradiction that our mountainless tiling contains at least one tile that crosses a red border. Let $t$ be such that the highest red border that is crossed is the one between $V_t$ and $V_{t+1}$, and let $X=V_1\cup\cdots\cup V_t$. According to \cref{lem:valleys_C}, the only stones that cross the red border between $V_t$ and $V_{t+1}$ are valley stones. Furthermore, each such valley stone uses exactly one box from $X$, and each such box is labeled C. However, each tile lying entirely within $X$ has exactly one box labeled A, one box labeled B, and one box labeled C. This is a contradiction because one can readily check that there are equal numbers of boxes labeled A, B, and C lying in $X$.

We conclude the proof by showing that $V_m$ has $2m$ mountainless tilings.
We may tile $V_m$ by placing a single valley stone as far left as possible (e.g., the leftmost blue silhouette in \cref{Fig10}), and then tiling the remaining region with bones.
This implies the Conway--Lagarias invariant of the region $V_m$ is $-1$, so any mountainless tiling of $V_m$ must use exactly one valley stone.
It is straightforward to see that there are $2m$ positions for this one valley stone (those centered at a cell labeled B on the left half or at a cell labeled A on the right half) that admit a tiling of the remaining region with bones; moreover, each of these positions for the valley stone gives rise to a unique mountainless tiling.
For example, the 6 positions for the valley stone in $V_3$ are shown in \cref{Fig10}.
Placing a valley stone in any other position would split $V_m$ into two parts such that the number of cells in each part is not divisible by $3$, so it would not yield a tiling. Thus, there are $2m$ mountainless tilings of $V_m$, which implies (since no tile can cross a red border) that $\lambda$ has $(2n)!!$ mountainless tilings. 
\end{proof}

\begin{figure}[ht]
  \begin{center}\includegraphics[height=3.217cm]{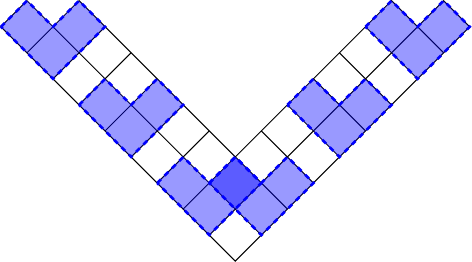} \end{center}
  \caption{There are $6$ positions where a valley stone could appear in a mountainless tiling of $V_3$ (two of these positions overlap). }
  \label{Fig10}
\end{figure}

Recall that we are assuming $2\leq a\leq b\leq 2a$ and $a+b\equiv 0\pmod 3$; say $a+b=3(N+1)$ with $N \geq 1$. Let $B_{a,b}$ be the region obtained by transferring the $(a,b)$-benzel into the square grid as in \cref{Sec:Square}. 
We can embed $B_{a,b}$ inside of the Young diagram of $\lambda_N$.
More precisely, it is not hard to see that there are exactly three boxes of maximal height in $B_{a,b}$, corresponding to the three rightmost cells of the $(a,b)$-benzel, and there are exactly three boxes of maximal height in $\lambda_N$. There is a unique way to embed $B_{a,b}$ into $\lambda_N$ so that the three maximal-height boxes in $B_{a,b}$ coincide with those of $\lambda_N$. \cref{Fig12,Fig13} illustrate this for $(a,b)=(7,8)$ and $(a,b)=(8,10)$, respectively; in each of these figures, the embedded image of $B_{a,b}$ is colored in blue in the diagram on the right. 

\begin{figure}[ht]
  \begin{center}
\includegraphics[height=4.179cm]{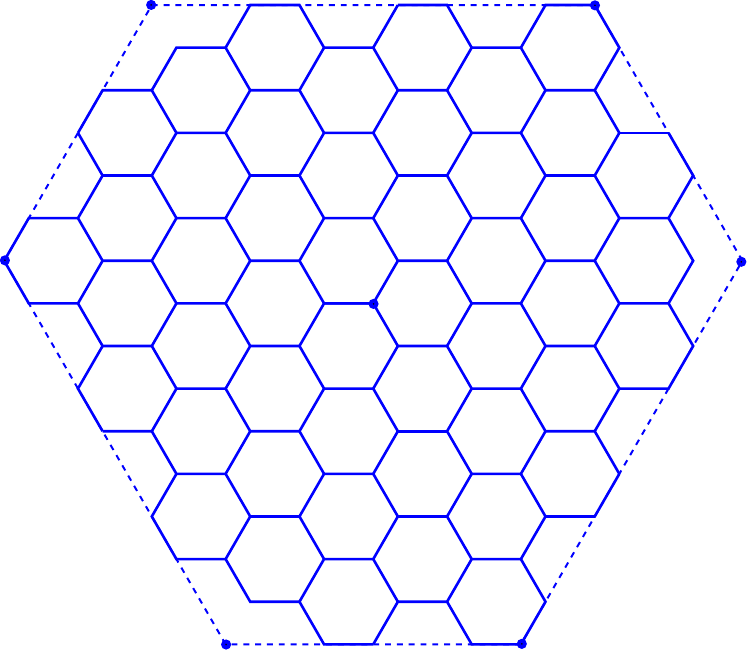}\qquad\qquad\includegraphics[height=4.011cm]{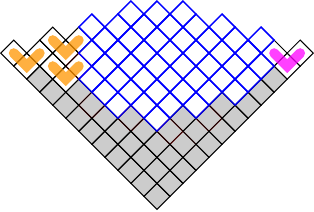} 
 \end{center}
  \caption{The $(7,8)$-benzel (left) and its embedded image $B_{7,8}$ in $\lambda_4$ (right), shown in blue. There are $L=2$ columns of orange valley stones on the left side of $\lambda_4$ and $R=1$ column of pink valley stones on the right side of $\lambda_4$. The region obtained by removing $B_{7,8}$ and the orange and pink valley stones is the Young diagram of $\theta_{7,8}$, which is shaded gray. }\label{Fig12}
\end{figure}

We can divide the region $\lambda_N\setminus B_{a,b}$ into three pieces. The first piece is a region to the left of $B_{a,b}$ that can be tiled by $\binom{L+1}{2}$ valley stones, where $L=\frac{2b-a}{3}-1$; these valley stones (shown in orange in Figures~\ref{Fig12} and~\ref{Fig13}) are arranged in columns of sizes $1,2,\ldots,L$, from left to right. The second piece is a region to the right of $B_{a,b}$ that can be tiled by $\binom{R+1}{2}$ valley stones, where $R=\frac{2a-b}{3}-1$; these valley stones (shown in pink in Figures~\ref{Fig12} and~\ref{Fig13}) are arranged in columns of sizes $R,R-1,\ldots,1$, from left to right. The third piece is the Young diagram of a partition $\theta_{a,b}$ (shaded in Figures~\ref{Fig12} and~\ref{Fig13}). It is most convenient to describe the partition $\theta_{a,b}$ via its abacus word, which is \begin{equation}\label{Eq:theta}
    w_{\theta_{a,b}}=\cdots\bullet\bullet\bullet(\circ\bullet\bullet)^{L}\bullet(\circ\bullet\bullet)^{R}(\circ\circ\bullet)^{L}\circ(\circ\circ\bullet)^{R}\circ\circ\circ\cdots.
\end{equation}

Recall that the \dfn{Durfee square} of a partition $\mu$ is the $s\times s$ square Young diagram inside of $\mu$, where $s$ is the largest integer such that $\mu$ has at least $s$ parts of size at least $s$. We call $s$ the \dfn{size} of the Durfee square. The following lemma, which we record for later use, can be read off immediately from the abacus word in \eqref{Eq:theta} (using the fact that $L+R=N-1$). 

\begin{lemma}\label{lem:Durfee}
Let $a$ and $b$ be integers with $2\leq a\leq b\leq 2a$ and $a+b\equiv 0\pmod 3$. The Durfee square of $\theta_{a,b}$ has size $N-1$, where $N=\frac{a+b}{3}-1$. 
\end{lemma}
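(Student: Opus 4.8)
The plan is to read off the Durfee square size directly from the abacus word in \eqref{Eq:theta}, using the standard dictionary between a partition's abacus word and its Durfee square. Recall that if a partition $\mu$ has abacus word $w_\mu$, then the size of its Durfee square equals the number of $\bullet$'s appearing to the right of position $0$ in the canonical coordinatization $w_\mu$ — equivalently, it is the number of ``descents past the diagonal,'' i.e.\ the number of down-steps in the northern border of $\mu$ that occur at or after the box directly above the origin. So the first step is to verify that $w_{\theta_{a,b}}$ as written in \eqref{Eq:theta} is indeed in canonical coordinatization: the period (position $0$) sits immediately before the block $(\circ\circ\bullet)^{L}\circ(\circ\circ\bullet)^{R}\circ\circ\circ\cdots$, and one checks that the number of $\circ$'s strictly left of position $0$ equals the number of $\bullet$'s at or right of position $0$ (this is the self-consistency condition defining the canonical coordinatization).

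The second step is the bookkeeping. To the right of position $0$, the word is $(\circ\circ\bullet)^{L}\circ(\circ\circ\bullet)^{R}\circ\circ\circ\cdots$, which contains exactly $L+R$ symbols $\bullet$ (one in each of the $L$ copies of $\circ\circ\bullet$ and one in each of the $R$ copies). Hence the Durfee square of $\theta_{a,b}$ has size $L+R$. Since $L=\frac{2b-a}{3}-1$ and $R=\frac{2a-b}{3}-1$, we get $L+R=\frac{(2b-a)+(2a-b)}{3}-2=\frac{a+b}{3}-2=(N+1)-2=N-1$, using $a+b=3(N+1)$, i.e.\ $N=\frac{a+b}{3}-1$. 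This gives the claimed value.

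The one genuine point requiring care — the ``main obstacle,'' such as it is — is pinning down precisely which combinatorial statistic of the abacus word computes the Durfee square size, and confirming that \eqref{Eq:theta} is presented in the canonical coordinatization so that ``position $0$'' means what it should. Concretely, I would state and briefly justify the lemma: \emph{for any partition $\mu$, the size of the Durfee square of $\mu$ equals $|\{\ell\geq 0 : w_\mu(\ell+1/2)=\bullet\}|$.} This follows because that quantity counts the number of boxes of $\mu$ lying on or above the main anti-diagonal through the origin in Russian notation, i.e.\ the boxes of $\mu$ that are at height $\geq$ their own column index, which is exactly the number of parts of $\mu$ that are at least as large as their index — the defining property of the Durfee square size. (Alternatively one can invoke the well-known fact that the number of $\bullet$'s right of the origin equals the number of $\circ$'s left of it, which equals the number of boxes directly above the origin, already noted in \cref{Sec:Abacus}, and then observe that this count stabilizes exactly at the Durfee square.) Once this dictionary is in place, the proof of \cref{lem:Durfee} is the one-line count above, so the writeup can simply say the result ``can be read off immediately from \eqref{Eq:theta}'' after the dictionary is recalled, exactly as the statement in the excerpt already suggests.
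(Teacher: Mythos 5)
Your proposal is correct and matches the paper's approach: the paper gives no proof beyond ``can be read off immediately from the abacus word in \eqref{Eq:theta} (using the fact that $L+R=N-1$),'' and you have simply made that explicit — correctly locating position $0$ in \eqref{Eq:theta} via the balance condition and invoking the fact (already noted in \cref{Sec:Abacus}) that $|\{\ell\geq 0: w_\mu(\ell)=\bullet\}|$ counts the boxes directly above the origin, i.e.\ the Durfee square size. The count $L+R=\frac{a+b}{3}-2=N-1$ is exactly the computation the paper has in mind.
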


Our motivation for embedding $B_{a,b}$ into $\lambda_{N}$ comes from the following lemma, which states that there exists a $3$-ribbon tiling of $\lambda_N\setminus B_{a,b}$. This will allow us to extend $3$-ribbon tilings of $B_{a,b}$ to $3$-ribbon tilings of $\lambda_N$, which can then be analyzed via the abacus bijection and the remove-green bijection $\Compress$. 

\begin{lemma}\label{lem:tiling_the_complement}
Let $a$ and $b$ be integers with $2\leq a\leq b\leq 2a$ and $a+b\equiv 0\pmod 3$. Let $N=\frac{a+b}{3}-1$. As above, let $B_{a,b}$ be the embedded image of the $(a,b)$-benzel inside of $\lambda_N$. There exists a $3$-ribbon tiling of the region $\lambda_N\setminus B_{a,b}$. 
\end{lemma}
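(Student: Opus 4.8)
The plan is to reduce the statement to the tileability of a single partition, namely $\theta_{a,b}$, which we handle via the machinery of \cref{Sec:Abacus}. By the discussion immediately preceding the lemma, the region $\lambda_N\setminus B_{a,b}$ is a disjoint union of three pieces: a left ``wing'' that is explicitly tiled by $\binom{L+1}{2}$ valley stones (arranged in columns of heights $1,2,\dots,L$), a right ``wing'' that is explicitly tiled by $\binom{R+1}{2}$ valley stones (arranged in columns of heights $R,R-1,\dots,1$), and the Young diagram of the partition $\theta_{a,b}$ whose abacus word is \eqref{Eq:theta}. Since each valley stone is a $3$-ribbon, the two wings already come equipped with $3$-ribbon tilings, so the entire lemma comes down to exhibiting a $3$-ribbon tiling of $\theta_{a,b}$.

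To produce such a tiling I would invoke \cref{Cor:TilingsExist}: a $3$-ribbon tiling of the Young diagram of $\theta_{a,b}$ exists if and only if its $3$-core is empty, which by \cref{remark:coreCharge} happens if and only if all three of its $3$-charges vanish. Thus it suffices to compute the $3$-charges of $\theta_{a,b}$ directly from the explicit abacus word in \eqref{Eq:theta}. First one locates the canonical coordinatization: counting symbols (and using $L+R=N-1$) shows that the ``active'' part of \eqref{Eq:theta} contains exactly $3N-2$ copies of $\circ$ and $3N-2$ copies of $\bullet$, and a short computation places the period of the canonical coordinatization immediately after the block $(\circ\bullet\bullet)^{L}\bullet(\circ\bullet\bullet)^{R}$ (so that $3N-2$ active symbols lie to its left). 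One then splits $w_{\theta_{a,b}}$ into the three subwords $w^{(0)},w^{(1)},w^{(2)}$ obtained by reading every third symbol. Because $3N\equiv 0\pmod 3$, each length-$3$ block $\circ\bullet\bullet$ (or $\circ\circ\bullet$) contributes exactly one of its symbols to each subword, in a fixed pattern, so one can tabulate, for each $j\in\{0,1,2\}$, how many $\circ$'s of $w^{(j)}$ lie to the left of the period and how many $\bullet$'s lie to the right. The condition $c_j=0$ is precisely that these two counts agree, and in all three cases the tabulation reduces it to a trivial identity ($R=R$, $0=0$, and $L=L$ respectively). Hence all $3$-charges vanish, the $3$-core of $\theta_{a,b}$ is empty, and the desired tiling exists. (Equivalently, one could read the full $3$-quotient of $\theta_{a,b}$ off of \eqref{Eq:theta} and use the size criterion $|\theta_{a,b}^{(0)}|+|\theta_{a,b}^{(1)}|+|\theta_{a,b}^{(2)}|=\tfrac13|\theta_{a,b}|$ from \cref{Cor:TilingsExist}; but the charge computation is shorter.)

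I expect the only genuinely delicate point to be pinning down the canonical coordinatization of \eqref{Eq:theta} — that is, getting the location of the period exactly right before splitting into the three subwords — since an off-by-one error there would corrupt the charge computation. The geometric input, namely that $\lambda_N\setminus B_{a,b}$ decomposes into the two valley-stone wings together with $\theta_{a,b}$, is the bulkier part of the overall argument, but it is essentially visual and has already been set up in the text preceding the lemma; if one wanted a fully self-contained derivation one would instead compute the abacus word of $\lambda_N$, subtract the contributions of the embedded $B_{a,b}$ and of the removed valley stones, and check that \eqref{Eq:theta} results. Once the coordinatization is fixed, the remaining bookkeeping is entirely mechanical.
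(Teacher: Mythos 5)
Your proposal is correct and follows essentially the same route as the paper: decompose $\lambda_N\setminus B_{a,b}$ into the two valley-stone wings plus $\theta_{a,b}$, then apply \cref{Cor:TilingsExist} (via \cref{remark:coreCharge}) by checking from the abacus word \eqref{Eq:theta} that the $3$-charges of $\theta_{a,b}$ all vanish; your placement of the period after $(\circ\bullet\bullet)^{L}\bullet(\circ\bullet\bullet)^{R}$ is the right one. The paper's proof is the same argument, merely phrased by writing down the three quotient abacus words explicitly before reading off the charges, and it likewise records the size criterion as an alternative.
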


\begin{proof}
Let $L=\frac{2b-a}{3}-1$ and $R=\frac{2a-b}{3}-1$, and note that $N=L+R+1$. As above, we can divide $\lambda_N\setminus B_{a,b}$ into three pieces, two of which we know can be tiled using valley stones. Thus, it suffices to show that there exists a $3$-ribbon tiling of the remaining piece $\theta_{a,b}$. By \cref{Cor:TilingsExist}, we just need to check that $|\theta_{a,b}^{(0)}|+|\theta_{a,b}^{(1)}|+|\theta_{a,b}^{(2)}|=\frac{1}{3}|\theta_{a,b}|$, where $(\theta_{a,b}^{(0)},\theta_{a,b}^{(1)},\theta_{a,b}^{(2)})$ is the $3$-quotient of $\theta_{a,b}$. Alternatively, we could appeal to \cref{remark:coreCharge} and show that the $3$-charges of $\theta_{a,b}$ are all zero. From the abacus word $w_{\theta_{a,b}}$ given in \eqref{Eq:theta}, we can compute the abacus words of the partitions in the $3$-quotient of $\theta_{a,b}$: 
\begin{align*}
  w_{\theta_{a,b}}^{(0)}&=\cdots\bullet\bullet\bullet\circ^{L}\bullet^{N}\circ\circ\circ\cdots,  \\
  w_{\theta_{a,b}}^{(1)}&=\cdots\bullet\bullet\bullet\circ^{N}\bullet^{R}\circ\circ\circ\cdots, \\ 
  w_{\theta_{a,b}}^{(2)}&=\cdots\bullet\bullet\bullet\bullet^{L}\circ^{N}\circ\circ\circ\cdots=\cdots\bullet\bullet\bullet\circ\circ\circ\cdots.
\end{align*}
It is now straightforward to check that the $3$-charges of $\theta_{a,b}$ are all $0$, which completes the proof. 

For completeness, let us also outline an argument that uses \cref{Cor:TilingsExist}. We can see directly from the three abacus words listed above that the partitions in the $3$-quotient of $\theta_{a,b}$ are rectangles and that the number of boxes in each of these partitions is given by  $|\theta_{a,b}^{(0)}|=LN$, $|\theta_{a,b}^{(1)}|=RN$, and $|\theta_{a,b}^{(2)}|=0$. Because $\lambda_N$ has vanishing $3$-charges and $3$-quotient $(\square_N,\emptyset,\square_N)$, it follows from \cref{Cor:TilingsExist} that $|\lambda_N|=3(|\square_N|+|\emptyset|+|\square_N|)=6N^2$. We know by \eqref{Eq:BenzelSize} that $|B_{a,b}|=\frac{-a^2+4ab-b^2-a-b}{2}$. There are $\binom{L+1}{2}$ orange valley stones in the left piece of $\lambda_N$, and there are $\binom{R+1}{2}$ pink valley stones in the right piece. Thus, \[|\theta_{a,b}|=6N^2-\frac{-a^2+4ab-b^2-a-b}{2}-3\binom{L+1}{2}-3\binom{R+1}{2};\] one can verify that this equals $3LN+3RN$. Thus, $|\theta_{a,b}^{(0)}|+|\theta_{a,b}^{(1)}|+|\theta_{a,b}^{(2)}|=LN+RN+0=\frac{1}{3}|\theta_{a,b}|$. 
\end{proof}

\begin{figure}[ht]
  \begin{center}
 \includegraphics[height=5cm]{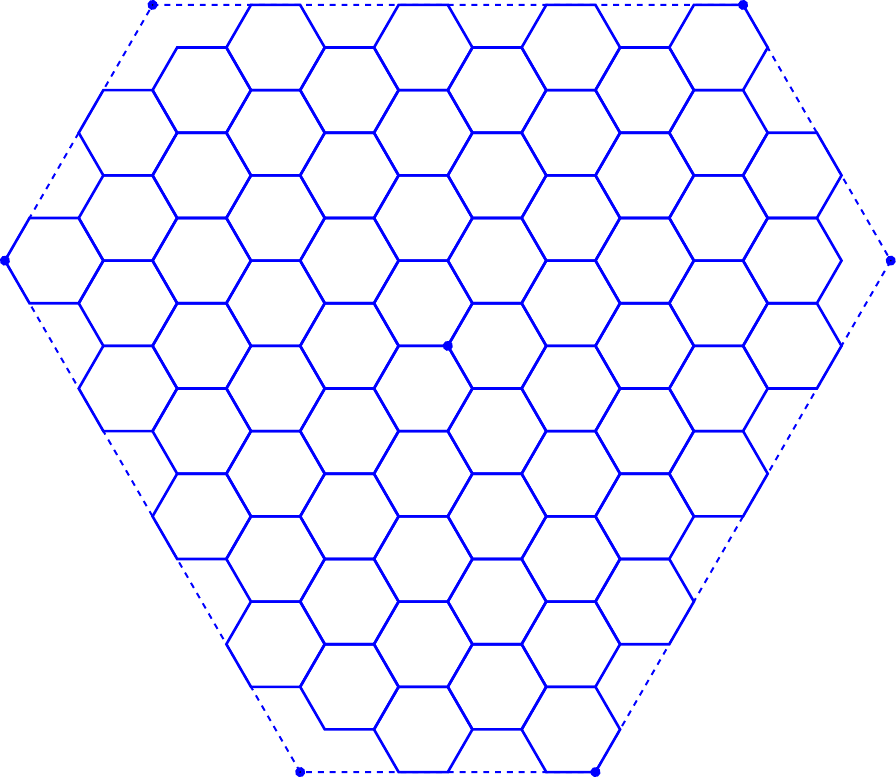}\qquad\qquad\includegraphics[height=5cm]{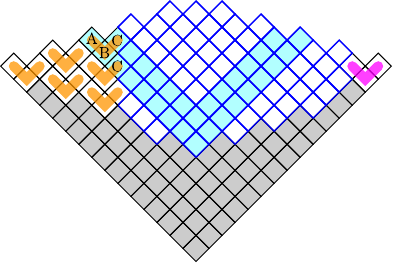} 
  \end{center}
  \caption{The $(8,10)$-benzel (left) and its embedded image $B_{8,10}$ in $\lambda_5$ (right), shown in blue. There are $L=3$ columns of orange valley stones on the left side of $\lambda_5$ and $R=1$ column of pink valley stones on the right side of $\lambda_5$. The region obtained by removing $B_{8,10}$ and the orange and pink valley stones is the Young diagram of $\theta_{8,10}$, which is shaded gray. The V-shaped region $V_{N-L+1}=V_{3}$ is shaded in light blue. There are exactly $4$ boxes in $V_3$ that lie outside of $B_{8,10}$, and they are marked with their labels A, B, C, and C.}\label{Fig13}
\end{figure}

We can now prove another piece of \cref{thm:mountainless}.

\begin{proposition}\label{prop:b-aatleast2}
Let $a$ and $b$ be integers with $2\leq a\leq b\leq 2a$, $a+b\equiv 0\pmod 3$, and $b-a\geq 2$. There are no mountainless tilings of $B_{a,b}$. 
\end{proposition}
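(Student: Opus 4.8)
The plan is to argue by contradiction, transfer a putative mountainless tiling of $B_{a,b}$ to a $3$-ribbon tiling of $\lambda_N$, and run a label-counting argument against the red-border structure. Before doing so I would dispose of the boundary case $b=2a$: here the $(a,2a)$-benzel coincides with a benzel to which \cref{theorem:1mod3} applies, so by \cref{remark:boundaryCase} its unique stones-and-bones tiling consists entirely of right stones; in particular it uses no valley stones or bones and hence is not mountainless. So from now on I assume $b<2a$, which forces $R=\frac{2a-b}{3}-1\ge 0$; combined with $b-a\ge 2$ and $a\le b$, a short check also gives $L=\frac{2b-a}{3}-1\ge 2$, so that $V_{N-L+1}$ and $V_{N-L+2}$ are both among the V-shapes $V_1,\dots,V_N$ of $\lambda_N$, and $V_1\cup\cdots\cup V_{N-L}$ is nonempty.

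Suppose then that $\mathcal M$ is a mountainless tiling of $B_{a,b}$. Using \cref{lem:tiling_the_complement}, I fix a $3$-ribbon tiling $\mathcal S$ of $\lambda_N\setminus B_{a,b}$ and form the $3$-ribbon tiling $\mathcal T=\mathcal M\cup\mathcal S$ of $\lambda_N$. (I do not claim $\mathcal T$ is mountainless — the piece $\theta_{a,b}$ may well be tiled using mountain stones — but this is harmless.) I then count, in two ways, the boxes of each label $\mathrm A,\mathrm B,\mathrm C$ inside $\mathcal R:=B_{a,b}\cap(V_1\cup\cdots\cup V_{N-L+1})$. Since $\mathcal S$ tiles the region disjoint from $B_{a,b}$, every tile of $\mathcal T$ meeting $\mathcal R$ is a tile of $\mathcal M$, hence a valley stone or a bone; and since a bone cannot cross a red border (\cref{lem:valleys_C}) while a stone spans only two rows, each such tile meets at most two consecutive V-shapes. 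Any tile of $\mathcal M$ not entirely contained in $\mathcal R$ must therefore cross the red border between $V_{N-L+1}$ and $V_{N-L+2}$; by \cref{lem:valleys_C} it is not a bone, and as $\mathcal M$ is mountainless it is a valley stone with exactly one box, labeled $\mathrm C$, above that border (hence inside $\mathcal R$), its other two boxes lying in $V_{N-L+2}$. Because every $3$-ribbon contains exactly one box of each label, the tiles contained in $\mathcal R$ contribute $(1,1,1)$ to the $(\mathrm A,\mathrm B,\mathrm C)$-count of $\mathcal R$ and the crossing tiles contribute $(0,0,1)$; hence $\mathcal R$ contains equally many $\mathrm A$-boxes as $\mathrm B$-boxes and at least as many $\mathrm C$-boxes as $\mathrm A$-boxes.

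For the second count I would use that $V_1\cup\cdots\cup V_{N-L+1}$ contains equally many boxes of each of the three labels (the fact already exploited in the proof of \cref{prop:lambda_n_(2n)!!}). This is where the hypothesis $b-a\ge 2$ enters: I would read off from the way $B_{a,b}$ sits inside $\lambda_N$ — equivalently from the abacus word \eqref{Eq:theta} of $\theta_{a,b}$ together with the columns of orange and pink valley stones — that $V_1\cup\cdots\cup V_{N-L}\subseteq B_{a,b}$, and that the ``deficit'' $V_{N-L+1}\setminus B_{a,b}$ consists of boxes with equally many labeled $\mathrm A$ as $\mathrm B$ and exactly one more labeled $\mathrm C$ (four boxes $\mathrm A,\mathrm B,\mathrm C,\mathrm C$ in the example of \cref{Fig13}; when instead $b-a\le 1$ the analogous deficit region turns out to be balanced, consistently with \cref{prop:lambda_n_(2n)!!}). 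Granting this, $\mathcal R$ is a balanced label-triple minus this deficit, so it has equally many $\mathrm A$- as $\mathrm B$-boxes but strictly fewer $\mathrm C$-boxes than $\mathrm A$-boxes, contradicting the previous paragraph.

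The step I expect to be the main obstacle is exactly this last geometric bookkeeping: precisely locating $B_{a,b}$ inside $\lambda_N$ relative to the V-shapes and red borders, and determining the labels of the finitely many boxes of $V_{N-L+1}$ that lie outside $B_{a,b}$, uniformly in $a$ and $b$. Everything else is routine once that picture is in hand; the conceptual content is simply that $V_{N-L+1}$ is the first V-shape that $B_{a,b}$ fails to swallow whole, and that this failure is $\mathrm C$-heavy precisely when $b-a\ge 2$.
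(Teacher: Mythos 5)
Your proposal follows the paper's proof in all essentials: embed $B_{a,b}$ in $\lambda_N$, use \cref{lem:tiling_the_complement} to extend a hypothetical mountainless tiling to a $3$-ribbon tiling of all of $\lambda_N$, and derive a contradiction by counting boxes labelled A, B, C in $B_{a,b}\cap(V_1\cup\cdots\cup V_{N-L+1})$. Your final count (tiles contained in the region contribute $(1,1,1)$, border-crossing tiles contribute $(0,0,1)$ by \cref{lem:valleys_C}, hence at least as many C's as A's) is a global repackaging of the paper's version, which instead deduces the existence of a single tile with an A-box above the red border and a C-box below it and rules that tile out with \cref{lem:valleys_C}; the two arguments are equivalent. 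Your separate disposal of $b=2a$ via \cref{remark:boundaryCase} is a reasonable extra precaution, since in that case $R=-1$ and the decomposition of $\lambda_N\setminus B_{a,b}$ and the word \eqref{Eq:theta} degenerate.

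The one step you explicitly leave unverified --- that $V_1\cup\cdots\cup V_{N-L}\subseteq B_{a,b}$ and that $V_{N-L+1}\setminus B_{a,b}$ consists of exactly four boxes labelled A, B, C, C --- is precisely where the paper does its real work and where the hypothesis $b-a\geq 2$ enters, so as written your argument has a gap at its crux. The paper closes it as follows: since $a<b$ forces $L>R$, the V-shape $V_{N-L+1}$ meets the orange but not the pink valley stones, and it meets the orange ones in exactly four boxes labelled A, B, C, C; any further box of $V_{N-L+1}$ outside $B_{a,b}$ would have to lie in $\theta_{a,b}$, and this is excluded by showing that the lowest box of $V_{N-L+1}$ lies above the Durfee square of $\theta_{a,b}$. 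By \cref{lem:Durfee} that Durfee square has size $N-1$, so the condition becomes $2(N-L+1)\leq N+1$, which is equivalent to $b-a\geq 2$. Supplying this computation (or an equivalent reading of \eqref{Eq:theta}) is what remains to make your proof complete; your intuition that the deficit is C-heavy exactly when $b-a\geq 2$ is correct.
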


\begin{proof}
As above, let us embed $B_{a,b}$ into $\lambda_N$, where $N=\frac{a+b}{3}-1$. We will need to consider the labeling of the boxes in $\lambda_N$ with the letters A, B, and C as before. Let $L=\frac{2b-a}{3}-1$ and $R=\frac{2a-b}{3}-1$, and consider the $L$ columns of orange valley stones and the $R$ columns of pink valley stones as before. 

The smallest positive integer $m$ such that $V_m$ intersects an orange valley stone is $N-L+1$, and the smallest positive integer $m$ such that $V_m$ intersects a pink valley stone is $N-R+1$. The condition $a<b$ implies that $L>R$.  Therefore, $V_{N-L+1}$ intersects some of the orange valley stones, but it does not intersect any of the pink valley stones. More precisely, there are exactly $4$ boxes that belong to $V_{N-L+1}$ and orange valley stones; their labels are A, B, C, and C. See \cref{Fig13} for an example with $a=8$ and $b=10$. We claim that all of the boxes in $V_{N-L+1}$ other than these $4$ are contained in $B_{a,b}$. This is equivalent to the claim that the lowest box in $V_{N-L+1}$ is not in the Durfee square of $\theta_{a,b}$. We know by \cref{lem:Durfee} that the Durfee square of $\theta_{a,b}$ has size $N-1$, so the number of boxes in $\lambda_N$ that are centered on the imaginary axis and lie above the Durfee square of $\theta_{a,b}$ is $2N-(N-1)=N+1$. Hence, our claim is equivalent to the inequality $2(N-L+1)\leq N+1$, which is equivalent to our hypothesis that $b-a\geq 2$. 

Suppose by way of contradiction that $\mathcal T$ is a mountainless tiling of $B_{a,b}$. It follows from the claim established in the previous paragraph that $B_{a,b}\cap (V_1\cup\cdots\cup V_{N-L+1})$ has strictly fewer boxes labeled C than boxes labeled A. Since each $3$-ribbon tile uses exactly one box of each label, this implies that there must be a tile $v$ in $\mathcal T$ that has a box labeled A in $B_{a,b}\cap (V_1\cup\cdots\cup V_{N-L+1})$ and has a box labeled C in $B_{a,b}\setminus (V_1\cup\cdots\cup V_{N-L+1})$.
\cref{lem:tiling_the_complement} tells us that there exists a $3$-ribbon tiling of $\lambda_N\setminus B_{a,b}$, so we can extend $\mathcal T$ to a (not necessarily mountainless) $3$-ribbon tiling $\mathcal T^*$ of $\lambda_N$. Then $v$ is a tile in $\mathcal T^*$ that crosses the red border between $V_{N-L+1}$ and $V_{N-L+2}$, and it is not a mountain stone because it belongs to the mountainless tiling $\mathcal T$. Applying \cref{lem:valleys_C} to the tiling $\mathcal T^*$, we find that $v$ must be a valley stone that has a box labeled C in $V_{N-L+1}$. This is a contradiction because the only box labeled C in $v$ is in $B_{a,b}\setminus (V_1\cup\cdots\cup V_{N-L+1})$. 
\end{proof}

We are now left to consider the cases $(a,b)=(3n,3n)$ and $(a,b)=(3n+1,3n+2)$.
Define $F_n$ to be the region in the square grid (considered modulo translation) obtained by placing $n-1$ columns of negative bones next to each other, where all bones in the same column have leftmost points with the same real part and the $m$-th column counted from the left has $n-m$ bones. For example, \cref{Fig14} shows the regions $F_2,F_3,F_4$, and $F_5$.

\begin{figure}[ht]
  \begin{center}{\includegraphics[height=3.254cm]{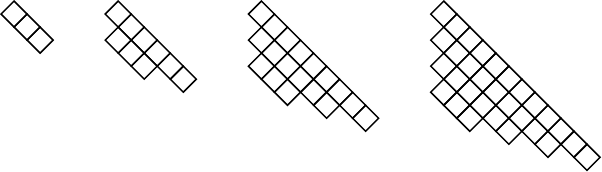}}
  \end{center}
  \caption{The shapes $F_2,F_3,F_4,F_5$, listed from left to right.}\label{Fig14}
\end{figure}

\begin{lemma}\label{lem:C_m}
The region $F_n$ has a unique $3$-ribbon tiling, and this tiling only uses negative bones.  
\end{lemma}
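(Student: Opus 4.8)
The plan is to prove this by induction on $n$, peeling off forced negative bones from the top-left corner of $F_n$. The base case $n=2$ is immediate, since $F_2$ is a single negative bone and hence has a unique $3$-ribbon tiling, which uses only a negative bone. (Note that a pure enumeration of the $3$-ribbon tilings of $F_n$, say via the abacus bijection, would not by itself deliver the ``only negative bones'' conclusion, so I would argue directly.)

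For the inductive step, fix $n\geq 3$ and let $\mathcal{D}$ be an arbitrary $3$-ribbon tiling of $F_n$. First I would set up explicit coordinates for $F_n$, using the identification (from \cref{Sec:Square}) of the diamond grid with the diagonals of Young diagrams, under which a negative bone is precisely a ``$\searrow$''-oriented vertical triple of boxes lying in a single column of a Young diagram. With coordinates in hand I would record two structural facts about $F_n$: (a) the boxes of $F_n$ lying in its leftmost grid-column (the column of smallest real part) are exactly the bottom boxes of the $n-1$ negative bones comprising the leftmost bone-column, and they form an unbroken run; and (b) near its top-left corner $F_n$ is ``thin'' enough that the only $3$-ribbon fitting there is a negative bone --- in particular no mountain stone, valley stone, or ``$\nearrow$''-bone fits. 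Granting (a) and (b), let $c$ be the topmost box of the leftmost grid-column. Any tile of $\mathcal{D}$ containing $c$ must have $c$ as its leftmost box, since nothing of $F_n$ lies further left; and by (b) that tile must be the topmost negative bone of the leftmost bone-column. Deleting this forced bone from $F_n$ leaves a region whose leftmost grid-column has lost only its top box, so the same reasoning applies again; iterating $n-1$ times forces all $n-1$ negative bones of the leftmost bone-column into $\mathcal{D}$. By construction of $F_n$ as a disjoint union of its bone-columns, what remains of $F_n$ after removing these is a translate of $F_{n-1}$, so the inductive hypothesis shows that the rest of $\mathcal{D}$ is forced as well and uses only negative bones. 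Hence $\mathcal{D}$ is unique and uses only negative bones.

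The main obstacle is the geometric bookkeeping behind structural facts (a) and (b): one must fix an explicit coordinate description of $F_n$ and then check, at the top-left corner (and at the analogous corner after each bone is peeled off), that a negative bone is the unique $3$-ribbon that fits. This is a finite, local verification rather than a deep step, but it is where the care is needed; the surrounding induction is routine.
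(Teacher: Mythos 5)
Your proposal is correct and follows essentially the same route as the paper: the paper's proof also inducts on $n$, labels the leftmost boxes $b_1,\ldots,b_{n-1}$ of $F_n$ from top to bottom, argues that each must be the leftmost box of a forced negative bone (placed one at a time, top to bottom), and observes that removing these $n-1$ bones leaves a copy of $F_{n-1}$. The only difference is one of emphasis — you flag the local geometric verifications (your facts (a) and (b)) explicitly, whereas the paper dispatches them with ``it is clear that'' — but the underlying forcing argument is identical.
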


\begin{proof}
The lemma is trivial when $n=1$ since $F_1$ is empty, and it is also obvious when $n=2$. Therefore, we may assume $n\geq 3$ and proceed by induction on $n$. Let $b_1,\ldots,b_{n-1}$ be the leftmost boxes in $F_n$, listed from top to bottom. Suppose we wish to construct a $3$-ribbon tiling of $F_n$. It is clear that we must use a negative bone whose leftmost box is $b_1$. Once we place that bone in the tiling, we readily see that we must use another negative bone whose leftmost box is $b_2$. Continuing in this fashion, we see that each box $b_j$ must be the leftmost box in a negative bone. Once we have placed these $n-1$ negative bones into our tiling, the remaining region has the same shape as $F_{n-1}$, so the proof follows by induction. 
\end{proof}

In what follows, we slightly abuse notation and write $\lambda_n$ for the Young diagram of the partition $\lambda_n$ considered modulo translation (i.e., we allow the lowest point of $\lambda_n$ to be a point other than $0$).

\begin{proposition}\label{prop:3n3n}
There are exactly $(2n)!!$ mountainless tilings of $B_{3n,3n}$. 
\end{proposition}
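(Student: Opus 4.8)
\textbf{Proof proposal for Proposition~\ref{prop:3n3n}.}
The plan is to show that $B_{3n,3n}$ is, up to translation, exactly the Young diagram $\lambda_n$, so that the proposition follows instantly from \cref{prop:lambda_n_(2n)!!}. When $(a,b)=(3n,3n)$ we have $N = \frac{a+b}{3}-1 = 2n-1$, and $L = \frac{2b-a}{3}-1 = n-1 = R$. So the general embedding picture specializes: there are $\binom{n}{2}$ orange valley stones on the left, $\binom{n}{2}$ pink valley stones on the right, and the gray region $\theta_{3n,3n}$ sitting in between, all inside $\lambda_{2n-1}$. I would first observe that this is not quite the cleanest route; instead I would argue directly that $B_{3n,3n}$, translated appropriately, coincides with $\lambda_n$.

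The main step is the geometric identification $B_{3n,3n} = \lambda_n$ (modulo translation). I would verify this by comparing abacus words, which is the most robust way to pin down a Young-diagram shape. On one hand, $\lambda_n$ has $3$-quotient $(\square_n,\emptyset,\square_n)$ with vanishing $3$-charges; its abacus word can be computed from this data exactly as in \cref{Sec:Abacus} (interleaving the three single-partition words $\cdots\bullet\bullet\bullet\circ^n\bullet^n\circ\circ\circ\cdots$, $\cdots\bullet\bullet\bullet\circ\circ\circ\cdots$, $\cdots\bullet\bullet\bullet\circ^n\bullet^n\circ\circ\circ\cdots$ with the appropriate charges). On the other hand, the square-grid region $B_{3n,3n}$ obtained from the $(3n,3n)$-benzel via the transfer of \cref{Sec:Square} is a genuine Young diagram (one should check it has no ``holes'' and its lower-left boundary is a staircase), and one can read off its northern border. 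Because the $(3n,3n)$-benzel is the symmetric case $a=b$, the square-grid region has a reflective symmetry that matches the symmetry of $\lambda_n$ (the $\lambda^{(0)}=\lambda^{(2)}=\square_n$ pattern), and counting boxes via \eqref{Eq:BenzelSize} gives $|B_{3n,3n}| = \frac{-9n^2+36n^2-9n^2-6n}{2} = 9n^2-3n = 6n^2 + (3n^2-3n) = 6n^2 + 3\cdot 2\binom{n}{2}$; meanwhile $|\lambda_n| = 6n^2$ by \cref{Cor:TilingsExist} applied to $3$-quotient $(\square_n,\emptyset,\square_n)$. These do not agree, which tells me $B_{3n,3n}$ is strictly larger than $\lambda_n$ — so the direct identification with $\lambda_n$ is wrong, and I should instead run the embedding-and-extension argument.

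So the corrected plan is the following. Embed $B_{3n,3n}$ into $\lambda_N = \lambda_{2n-1}$ as in the discussion preceding \cref{lem:tiling_the_complement}, with $L=R=n-1$ columns of orange (resp.\ pink) valley stones filling $\lambda_N \setminus B_{3n,3n}$ except for the gray partition $\theta_{3n,3n}$. By \cref{lem:tiling_the_complement} there is a $3$-ribbon tiling of $\lambda_N\setminus B_{3n,3n}$; in fact, using \cref{lem:C_m} and the shape of $\theta_{3n,3n}$ read off from \eqref{Eq:theta}, I would show this tiling of the complement is \emph{forced} and consists only of valley stones and negative bones — in particular it is mountainless. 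Concretely, the orange and pink stones are already valley stones, and $\theta_{3n,3n}$ (or the relevant pieces of it, after the $\Compress$ reduction) decomposes into shapes of the form $F_m$, each of which by \cref{lem:C_m} has a unique tiling using only negative bones. Consequently, extending a mountainless tiling $\mathcal T$ of $B_{3n,3n}$ by this forced complement yields a mountainless tiling $\mathcal T^*$ of $\lambda_{2n-1}$, and conversely every mountainless tiling of $\lambda_{2n-1}$ restricts on the complement to the forced mountainless tiling (because by \cref{prop:lambda_n_(2n)!!}'s proof no tile crosses a red border, so the tiles are confined to V-shapes, and the V-shapes meeting the complement are tiled in the unique forced way) and hence restricts to a mountainless tiling of $B_{3n,3n}$. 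This gives a bijection between mountainless tilings of $B_{3n,3n}$ and mountainless tilings of $\lambda_{2n-1}$; but $\lambda_{2n-1}$ has $(2(2n-1))!! = (4n-2)!!$ of them by \cref{prop:lambda_n_(2n)!!}, which is not $(2n)!!$.

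The discrepancy shows my bookkeeping of which V-shapes are ``used up'' by the complement is off: the point must be that the complement occupies all but $n$ of the $2n-1$ V-shapes in a forced mountainless way, and the $n$ remaining V-shapes each contribute a factor of $2m$ ranging over the right values so that the product is $(2n)!!$. So the crux — and the step I expect to be the main obstacle — is the precise combinatorial analysis of which boxes of which V-shapes $V_m$ of $\lambda_{2n-1}$ lie in $B_{3n,3n}$ versus in the complement, showing that exactly $n$ of the $V_m$ meet $B_{3n,3n}$ in a full V-shape (or in a region with $2m'$ mountainless tilings for the appropriate $m'$, $1\le m'\le n$) while the rest are entirely in the forced complement; this is a matter of carefully tracking the Durfee-square bound of \cref{lem:Durfee} (here of size $N-1 = 2n-2$), the staircase columns of orange/pink stones, and the A/B/C labels, all of which specialize cleanly in the symmetric case $L=R=n-1$. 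Once that accounting is done, the count $\prod_{m=1}^{n}(2m) = (2n)!!$ drops out of the no-crossing argument of \cref{prop:lambda_n_(2n)!!} applied V-shape by V-shape, and the proposition follows.
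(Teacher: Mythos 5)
Your proposal ends without a proof: after two false starts (the identification $B_{3n,3n}=\lambda_n$, which your own box count refutes, and the attempted bijection with mountainless tilings of all of $\lambda_{2n-1}$, which overcounts), you correctly sense that the resolution lies in tracking which boxes of which $V_m$ lie in $B_{3n,3n}$, but your final accounting is wrong and the decisive structural fact is missing. The paper's proof rests on the observation that $B_{3n,3n}$ itself decomposes as $\lambda_n\cup F_n\cup F_n'$, where $\lambda_n=V_1\cup\cdots\cup V_n$ is entirely contained in $B_{3n,3n}$ and $F_n$, $F_n'$ are the parts of $B_{3n,3n}$ below the red border between $V_n$ and $V_{n+1}$, lying to the left and right of the imaginary axis. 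Your claim that the V-shapes other than the ``full'' ones are ``entirely in the forced complement'' is false: $V_{n+1},\dots,V_{2n-1}$ each meet $B_{3n,3n}$ in parts of $F_n$ and $F_n'$, and these regions must be tiled by $\mathcal T$ itself, not by the complementary tiling. (Your own arithmetic $|B_{3n,3n}|=6n^2+3\cdot 2\binom{n}{2}$ is exactly the signal: the excess $2\cdot 3\binom{n}{2}$ is $|F_n|+|F_n'|$.) What is then needed, and what you do not supply, is \cref{lem:C_m}: $F_n$ (hence also its mirror image $F_n'$) has a unique $3$-ribbon tiling, consisting only of negative bones, so these two pieces contribute a factor of $1$.

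The other missing step is the argument that no tile of $\mathcal T$ crosses the red border between $V_n$ and $V_{n+1}$. The paper gets this by extending $\mathcal T$ to a (not necessarily mountainless) tiling $\mathcal T^*$ of $\lambda_{2n-1}$ via \cref{lem:tiling_the_complement}, noting that any tile crossing that border belongs to $\mathcal T$ and hence is not a mountain stone, applying \cref{lem:valleys_C} to conclude it would have to be a valley stone contributing exactly one box, labeled C, to $\lambda_n$, and then deriving a contradiction from the fact that $\lambda_n$ contains equally many boxes labeled A, B, and C while every tile uses one box of each label. You allude to ``the no-crossing argument of \cref{prop:lambda_n_(2n)!!} applied V-shape by V-shape,'' but that argument as stated applies to tilings of $\lambda_n$ itself, not to tilings of $B_{3n,3n}$ sitting inside $\lambda_{2n-1}$; the extension to $\mathcal T^*$ is precisely what licenses the use of \cref{lem:valleys_C} here. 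Once both pieces are in place, the count is immediate: a mountainless tiling of $B_{3n,3n}$ is a mountainless tiling of $\lambda_n$ (there are $(2n)!!$ by \cref{prop:lambda_n_(2n)!!}) together with the forced tilings of $F_n$ and $F_n'$.
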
 

\begin{proof}
As above, we can embed $B_{3n,3n}$ inside of $\lambda_N$, where $N=2n-1$. We consider the partition of $\lambda_N$ into the V-shapes $V_1,\ldots,V_N$, which are separated by red borders. The region $B_{3n,3n}$ breaks into three pieces as follows. The first piece is $\lambda_n=V_1\cup\cdots\cup V_n$. The second piece is the part of $B_{3n,3n}\setminus\lambda_n$ lying to the left of the imaginary axis; it has the same shape as the region $F_n$ described above, so we refer to it as $F_n$. The third piece is $F_n'$, which is the part of $B_{3n,3n}\setminus\lambda_n$ lying to the right of the imaginary axis; it is obtained by reflecting $F_n$ across the imaginary axis. Note that $F_n$ and $F_n'$ are not connected to each other. See \cref{Fig16} for an example with $n=3$. 

Suppose $\mathcal T$ is a mountainless tiling of $B_{3n,3n}$. \cref{lem:tiling_the_complement} tells us that there exists a $3$-ribbon tiling of $\lambda_N\setminus B_{3n,3n}$, so we can extend $\mathcal T$ to a (not necessarily mountainless) $3$-ribbon tiling $\mathcal T^*$ of $\lambda_N$. Suppose there is a tile $v$ in $\mathcal T^*$ that crosses the red border between $V_n$ and $V_{n+1}$. It follows from the construction of $\mathcal T^*$ that $v$ must be a tile in $\mathcal T$. Hence, $v$ is not a mountain stone. Applying \cref{lem:valleys_C} to the whole tiling $\mathcal T^*$ of $\lambda_N$, we find that $v$ must be a valley stone that has exactly one box in $\lambda_n$; moreover, this one box has label C. 

We have shown that every tile in $\mathcal T^*$ that crosses the red border between $V_n$ and $V_{n+1}$ must have exactly one box in $\lambda_n$ and that that box must have label C. However, there are equal numbers of boxes labeled A, B, and C lying in $\lambda_n$, and we know that every tile uses exactly one box of each label. It follows that there cannot be any tiles in $\mathcal T^*$ that cross the red border between $V_n$ and $V_{n+1}$. Thus, the tiling $\mathcal T$ can be decomposed into a mountainless tiling of $\lambda_n$, a mountainless tiling of $F_n$, and a mountainless tiling of $F_n'$. \cref{lem:C_m} tells us that $F_n$ has a unique mountainless tiling, and, by symmetry, it follows immediately from the same lemma that $F_n'$ also has a unique mountainless tiling. This shows that the number of mountainless tilings of $B_{3n,3n}$ is the same as the number of mountainless tilings of $\lambda_n$, which we know is $(2n)!!$ by \cref{prop:lambda_n_(2n)!!}. 
\end{proof}

\begin{figure}[ht]
  \begin{center}{\includegraphics[height=5cm]{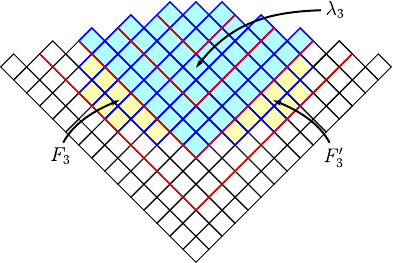}}
  \end{center}
  \caption{The region $B_{9,9}$, which we have embedded into $\lambda_{5}$, breaks into three pieces. The first piece is $\lambda_3$, which is shaded light blue. The other two pieces are $F_3$ and $F_3'$, which are shaded yellow.}\label{Fig16}
\end{figure}

We next want to prove $B_{3n+1,3n+2}$ also has $(2n)!!$ mountainless tilings. In order to do so, we replace \cref{lem:C_m} with the following lemma. 

\begin{lemma}\label{lem:C_m_part_2}
The region $B_{3n+1,3n+2}\setminus\lambda_n$ has a unique mountainless tiling, and this tiling only uses bones.
\end{lemma}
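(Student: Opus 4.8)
The plan is to mimic the structure of the proof of \cref{prop:3n3n}, so I would first understand the geometry of $B_{3n+1,3n+2}\setminus\lambda_n$ after embedding $B_{3n+1,3n+2}$ into $\lambda_N$ with $N=\frac{(3n+1)+(3n+2)}{3}-1=2n$. Here $L=\frac{2b-a}{3}-1=\frac{2(3n+2)-(3n+1)}{3}-1=n$ and $R=\frac{2a-b}{3}-1=\frac{2(3n+1)-(3n+2)}{3}-1=n-1$, so $\lambda_n=V_1\cup\cdots\cup V_n$ sits at the top, and $B_{3n+1,3n+2}\setminus\lambda_n$ splits into a left piece and a right piece, each disconnected from the other; I expect these to be (reflections of) the staircase-of-bones regions $F_m$ or small perturbations thereof. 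The first step is thus to pin down exactly which shapes these two pieces are, by reading off the relevant abacus words or by directly inspecting the embedding, analogous to \cref{Fig16}; the asymmetry $L\ne R$ in this case (unlike the $(3n,3n)$ case, where $L=R$) is the reason the two pieces differ, and is the reason we need a new lemma rather than reusing \cref{lem:C_m} verbatim.

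Once the two pieces are identified, I would show each has a unique $3$-ribbon tiling and that this tiling uses only bones. For the pieces that are literally of the form $F_m$ (or a reflection), this is immediate from \cref{lem:C_m}. For any piece that is not exactly an $F_m$ — I anticipate a slight ``extra row/column'' coming from the $+1,+2$ shift — I would run the same peeling argument as in the proof of \cref{lem:C_m}: identify the leftmost (or topmost) boxes of the region, observe that each must be the leftmost box of a (necessarily negative, resp.\ positive, depending on orientation and on which side of the imaginary axis we are) bone, place those forced bones, and induct on the resulting strictly smaller region of the same type. The key point ensuring no stone can appear is a parity/divisibility obstruction exactly as in \cref{prop:lambda_n_(2n)!!} and \cref{lem:C_m}: placing a stone, or a bone in any position other than the forced one, would disconnect the region into pieces whose cell-counts are not all divisible by $3$.

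There is, however, one subtlety that the plain peeling argument does not handle and that I would address separately, exactly as in the proofs of \cref{prop:3n3n} and \cref{lem:valleys_C}: I must rule out a tile that \emph{crosses the red border} between $V_n$ and $V_{n+1}$, i.e., a tile that uses boxes from both $\lambda_n$ and from $B_{3n+1,3n+2}\setminus\lambda_n$. To do this I would extend a hypothetical mountainless tiling of $B_{3n+1,3n+2}\setminus\lambda_n$ (together with any fixed mountainless tiling of $\lambda_n$, which exists by \cref{prop:lambda_n_(2n)!!}, and a $3$-ribbon tiling of $\lambda_N\setminus B_{3n+1,3n+2}$, which exists by \cref{lem:tiling_the_complement}) to a $3$-ribbon tiling $\mathcal T^*$ of all of $\lambda_N$, and then invoke \cref{lem:valleys_C}: any tile crossing that red border must be a valley stone with exactly one box above the border, and that box has label $\mathrm C$. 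Counting labels $\mathrm A,\mathrm B,\mathrm C$ in $\lambda_n$ (they occur in equal numbers, and every $3$-ribbon uses one of each) forces that no such crossing tile can exist. Hence $B_{3n+1,3n+2}\setminus\lambda_n$ tiles independently of $\lambda_n$, and the argument reduces to the uniqueness for the two disconnected pieces established above.

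The main obstacle I expect is purely bookkeeping: correctly identifying the two pieces of $B_{3n+1,3n+2}\setminus\lambda_n$ and verifying that the peeling induction closes up cleanly when a piece is not exactly some $F_m$ but an $F_m$ with a bit of extra boundary. The conceptual content — the red-border argument via \cref{lem:valleys_C} and label-counting, and the forced-bone peeling via divisibility — is already in place, so the work is to make the geometry precise, presumably with the aid of a figure like \cref{Fig16}. One should also double-check the claim ``this tiling only uses bones,'' which in the $(3n+1,3n+2)$ case is plausible precisely because, unlike $V_m$ itself, the staircase regions $F_m$ have Conway--Lagarias invariant $0$ and admit all-bone tilings; I would confirm this invariant computation as part of the argument.
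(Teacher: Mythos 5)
Your core argument --- a forced-bone peeling in which each successive bone is the only tile that can cover the current extremal box --- is exactly the paper's proof, which simply exhibits the region for small $n$ (\cref{Fig15}) and observes that the bones are forced one at a time in a fixed order. Two points in your plan need correction, though neither is fatal. First, your geometric premise is off: unlike the $(3n,3n)$ case, the region $B_{3n+1,3n+2}\setminus\lambda_n$ does \emph{not} split into two disconnected pieces. With $N=2n$ there are $N+1=2n+1$ boxes of $\lambda_N$ centered on the imaginary axis lying inside $B_{3n+1,3n+2}$, while $\lambda_n$ accounts for only $2n$ of them, so exactly one central box survives in the complement and joins the left and right staircases at the bottom tip of $\lambda_n$; correspondingly the box count $3n(n+2)$ does not decompose as two triangular staircase counts. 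So you cannot quote \cref{lem:C_m} piecewise; you must run the peeling argument on the single connected region (which is what the paper does, and your fallback description of the induction would handle this once the geometry is pinned down). Second, your red-border step is superfluous \emph{for this lemma}: the statement concerns tilings of the region $B_{3n+1,3n+2}\setminus\lambda_n$ itself, whose tiles by definition cannot protrude into $\lambda_n$, so there is nothing to rule out. The red-border/label-counting argument you describe is needed, and is used, in \cref{prop:3n+13n+2} to show that a mountainless tiling of all of $B_{3n+1,3n+2}$ decomposes along the border between $V_n$ and $V_{n+1}$; it does not belong in the proof of \cref{lem:C_m_part_2}. Your closing Conway--Lagarias check (invariant $0$ for the complement, hence an all-bone tiling is consistent) is a reasonable sanity check but is not part of the paper's argument, which gets "only bones" for free from the forcing.
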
 

\begin{proof}
Rather than describe the shape of $B_{3n+1,3n+2}\setminus\lambda_n$ in words, we simply draw it for $n=1,2,3$ in \cref{Fig15}, hoping the pattern is clear. In each of these three cases, we have tiled the region with bones, and we have numbered the bones. Imagine trying to construct a mountainless tiling of $B_{3n+1,3n+2}\setminus\lambda_n$. It is straightforward to check that we must use the bone labeled $1$ in such a tiling. More generally, once we have added the bones numbered $1,\ldots,\ell$ to the tiling, we are forced to add in the bone numbered $\ell+1$. This shows that the given tiling is the only mountainless tiling of $B_{3n+1,3n+2}\setminus\lambda_n$; we trust the reader to see how this argument extends to any choice of a positive integer $n$. 
\end{proof}

\begin{figure}[ht]
  \begin{center}{\includegraphics[height=3.4cm]{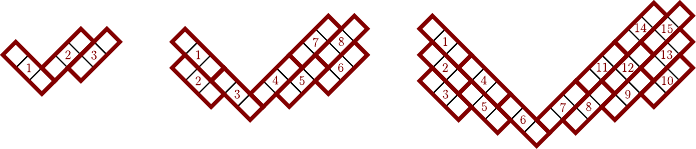}}
  \end{center}
  \caption{A tiling of $B_{3n+1,3n+2}\setminus\lambda_n$ for $n=1$ (left), $n=2$ (middle), and $n=3$ (right). In each tiling, the bones are numbered in such a way that the bone numbered $\ell+1$ is forced to be added to the tiling once the bones numbered $1,\ldots,\ell$ have already been included.}\label{Fig15}
\end{figure}

\begin{proposition}\label{prop:3n+13n+2}
There are exactly $(2n)!!$ mountainless tilings of $B_{3n+1,3n+2}$. 
\end{proposition}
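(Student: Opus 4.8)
The plan is to mimic the proof of \cref{prop:3n3n} almost verbatim, replacing the two disconnected pieces $F_n$ and $F_n'$ by the single region $B_{3n+1,3n+2}\setminus\lambda_n$ and replacing the appeal to \cref{lem:C_m} by an appeal to \cref{lem:C_m_part_2}. First I would embed $B_{3n+1,3n+2}$ into $\lambda_N$, where $N=\frac{(3n+1)+(3n+2)}{3}-1=2n$. I would then observe that $B_{3n+1,3n+2}$ decomposes as the disjoint union of $\lambda_n = V_1\cup\cdots\cup V_n$ (the top $n$ of the $N$ nested V-shapes) and the remaining region $B_{3n+1,3n+2}\setminus\lambda_n$, exactly as in the $(3n,3n)$ case but now the leftover region is a single connected piece rather than two reflected copies of $F_n$.

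The heart of the argument is showing that no tile of a mountainless tiling $\mathcal T$ of $B_{3n+1,3n+2}$ can cross the red border between $V_n$ and $V_{n+1}$. As in \cref{prop:3n3n}, I would use \cref{lem:tiling_the_complement} to extend $\mathcal T$ to a (not necessarily mountainless) $3$-ribbon tiling $\mathcal T^*$ of $\lambda_N$; any tile $v$ of $\mathcal T^*$ crossing that red border actually lies in $\mathcal T$ (since the complement $\lambda_N\setminus B_{3n+1,3n+2}$ is tiled separately), hence $v$ is not a mountain stone, so \cref{lem:valleys_C} forces $v$ to be a valley stone with exactly one box in $\lambda_n$, and that box is labeled C. But $\lambda_n$ contains equal numbers of boxes labeled A, B, and C, and every $3$-ribbon uses one box of each label; so it is impossible for a positive number of crossing tiles to each contribute exactly one C-labeled box to $\lambda_n$ while contributing no A- or B-labeled box there. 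This contradiction shows no tile crosses the border, so $\mathcal T$ restricts to a mountainless tiling of $\lambda_n$ together with a mountainless tiling of $B_{3n+1,3n+2}\setminus\lambda_n$. By \cref{lem:C_m_part_2} the latter region has a unique mountainless tiling, so mountainless tilings of $B_{3n+1,3n+2}$ are in bijection with mountainless tilings of $\lambda_n$, which number $(2n)!!$ by \cref{prop:lambda_n_(2n)!!}. This completes the proof, and combined with \cref{prop:3n3n,prop:b-aatleast2} and the reductions at the start of the section, it finishes \cref{thm:mountainless}.

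One point that needs a little care, and which I expect to be the main (minor) obstacle, is verifying that the decomposition $B_{3n+1,3n+2} = \lambda_n \sqcup (B_{3n+1,3n+2}\setminus\lambda_n)$ is actually a decomposition into $\lambda_n$ plus the region described in \cref{lem:C_m_part_2} — that is, that $\lambda_n$ really does sit inside $B_{3n+1,3n+2}$ in the expected position under the embedding into $\lambda_N$. This is analogous to (but slightly different from) the $(3n,3n)$ case: there the top $n$ V-shapes of $\lambda_N$ together with the two $F_n$-pieces reconstituted $B_{3n,3n}$, and here one must check that $V_1\cup\cdots\cup V_n\subseteq B_{3n+1,3n+2}$ and that the leftover is precisely the region drawn in \cref{Fig15}. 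This should be a routine matter of comparing the relevant boundary words (or just inspecting the pictures, as the authors do in the lemma), using $L=\frac{2b-a}{3}-1$ and $R=\frac{2a-b}{3}-1$ with $a=3n+1,\ b=3n+2$, which give $L=n-1$ and $R=n$. Once that geometric identification is in hand, the rest of the proof is a word-for-word adaptation of \cref{prop:3n3n}.

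\begin{proof}
As above, we embed $B_{3n+1,3n+2}$ inside of $\lambda_N$, where $N=\frac{(3n+1)+(3n+2)}{3}-1=2n$. We consider the partition of $\lambda_N$ into the V-shapes $V_1,\ldots,V_N$, which are separated by red borders. One checks (as in \cref{lem:C_m_part_2}, see \cref{Fig15}) that $B_{3n+1,3n+2}$ decomposes as the disjoint union of $\lambda_n=V_1\cup\cdots\cup V_n$ and the region $B_{3n+1,3n+2}\setminus\lambda_n$.

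Suppose $\mathcal T$ is a mountainless tiling of $B_{3n+1,3n+2}$. \cref{lem:tiling_the_complement} tells us that there exists a $3$-ribbon tiling of $\lambda_N\setminus B_{3n+1,3n+2}$, so we can extend $\mathcal T$ to a (not necessarily mountainless) $3$-ribbon tiling $\mathcal T^*$ of $\lambda_N$. Suppose there is a tile $v$ in $\mathcal T^*$ that crosses the red border between $V_n$ and $V_{n+1}$. It follows from the construction of $\mathcal T^*$ that $v$ must be a tile in $\mathcal T$, so $v$ is not a mountain stone. Applying \cref{lem:valleys_C} to the whole tiling $\mathcal T^*$ of $\lambda_N$, we find that $v$ must be a valley stone that has exactly one box in $\lambda_n$, and that box has label $\mathrm C$.

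Thus every tile in $\mathcal T^*$ that crosses the red border between $V_n$ and $V_{n+1}$ has exactly one box in $\lambda_n$ and that box has label $\mathrm C$. However, there are equal numbers of boxes labeled $\mathrm A$, $\mathrm B$, and $\mathrm C$ lying in $\lambda_n$, and every tile uses exactly one box of each label. It follows that there cannot be any tiles in $\mathcal T^*$ that cross the red border between $V_n$ and $V_{n+1}$. Hence $\mathcal T$ decomposes into a mountainless tiling of $\lambda_n$ and a mountainless tiling of $B_{3n+1,3n+2}\setminus\lambda_n$. By \cref{lem:C_m_part_2}, the region $B_{3n+1,3n+2}\setminus\lambda_n$ has a unique mountainless tiling. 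Therefore, the number of mountainless tilings of $B_{3n+1,3n+2}$ equals the number of mountainless tilings of $\lambda_n$, which is $(2n)!!$ by \cref{prop:lambda_n_(2n)!!}.
\end{proof}
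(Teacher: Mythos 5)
Your proposal is correct and follows essentially the same route as the paper's own proof: embed $B_{3n+1,3n+2}$ in $\lambda_{2n}$, use \cref{lem:tiling_the_complement} and \cref{lem:valleys_C} to rule out tiles crossing the red border between $V_n$ and $V_{n+1}$ via the A/B/C label count, then invoke \cref{lem:C_m_part_2} and \cref{prop:lambda_n_(2n)!!}. The only difference is that you write out the border-crossing argument in full where the paper simply says ``as in the proof of \cref{prop:3n3n}.''
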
 

\begin{proof}
Let us transfer the $(3n+1,3n+2)$-benzel into the square grid and embed it as the region $B_{3n+1,3n+2}$ inside of $\lambda_N$, where $N=2n$. We consider the partition of $\lambda_N$ into the $V$-shapes $V_1,\ldots,V_N$, which are separated by red borders. 

Suppose $\mathcal T$ is a mountainless tiling of $B_{3n+1,3n+2}$. \cref{lem:tiling_the_complement} tells us that there exists a $3$-ribbon tiling of $\lambda_N\setminus B_{3n+1,3n+2}$, so we can extend $\mathcal T$ to a (not necessarily mountainless) $3$-ribbon tiling $\mathcal T^*$ of $\lambda_N$. As in the proof of \cref{prop:3n3n}, one can show that there cannot be any tiles in $\mathcal T^*$ that cross the red border between $V_n$ and $V_{n+1}$. Thus, the tiling $\mathcal T$ can be decomposed into a mountainless tiling of $\lambda_n$ and a mountainless tiling of $B_{3n+1,3n+2}\setminus\lambda_n$. \cref{lem:C_m_part_2} tells us that $B_{3n+1,3n+2}\setminus\lambda_n$ has a unique mountainless tiling. Hence, the number of mountainless tilings of $B_{3n+1,3n+2}$ is the same as the number of mountainless tilings of $\lambda_n$, which we know is $(2n)!!$ by \cref{prop:lambda_n_(2n)!!}. 
\end{proof}

\begin{proof}[Proof of \cref{thm:mountainless}]
When $a+b\not\equiv 0\pmod 3$, the proof follows from the discussion at the beginning of this section. When $a+b\equiv 0\pmod 3$, tilings of the $(a,b)$-benzel using left stones, rising bones, and falling bones correspond to mountainless tilings of $B_{a,b}$, so the proof follows from \cref{prop:b-aatleast2,prop:3n3n,prop:3n+13n+2}. 
\end{proof}

\section{Two Bones and the Right Stone}\label{SecValleyless}

We now wish to enumerate tilings of the $(a,b)$-benzel using two types of bones along with the right stone. Without loss of generality, we may assume the vertical bone is forbidden. Thus, our goal is to prove \cref{thm:valleyless}, which provides the enumeration of tilings using rising bones, falling bones, and right stones whenever $a+b\not\equiv 2\pmod 3$.

\begin{proof}[Proof of \cref{thm:valleyless}]
Suppose $2\leq a\leq b\leq 2a$. If $b=2a$ or $a+b\equiv 1\pmod 3$, then we know by 
\cref{remark:boundaryCase} or \cref{theorem:1mod3} that the $(a,b)$-benzel has a unique stones-and-bones tiling and that this tiling consists entirely of right stones. Hence, we may assume in what follows that $a+b\equiv 0\pmod 3$ and $b<2a$. 

As in \cref{SecMountainless}, let us transfer the $(a,b)$-benzel into the square grid and embed it as the region $B_{a,b}$ inside of $\lambda_N$, where $N=\frac{a+b}{3}-1$. Recall that we think of $\lambda_N$ as the union of $N$ V-shaped regions $V_1,\ldots,V_N$. We say a tiling of a region in the square grid is \dfn{valleyless} if it consists of only mountain stones, positive bones, and negative bones. Tilings of the $(a,b)$-benzel using right stones, rising bones, and falling bones correspond to valleyless tilings of $B_{a,b}$; our goal is to prove that no such tilings exist. 

Suppose by way of contradiction that $\mathcal T$ is a valleyless tiling of $B_{a,b}$. Let $c$ be the number of tiles in $T$ that cross the red border between $V_1$ and $V_2$. According to \cref{lem:valleys_C}, each such tile must be a mountain stone that uses exactly $2$ boxes from $V_1$. Thus, the number of boxes in $V_1$ that belong to tiles that do not cross a red border is $6-2c$; this must be a multiple of $3$, so $c\in\{0,3\}$. Upon inspecting the shape of $V_1$, we immediately see that $c$ cannot be $3$. Hence, there are no tiles in $T$ that cross the red border between $V_1$ and $V_2$. It follows that we can restrict $\mathcal T$ to a valleyless tiling of $V_1$, which is our desired contradiction because there are no valleyless tilings of $V_1$.
\end{proof}

\section*{Acknowledgements}
Colin Defant was supported by the National Science Foundation under Award No.\ 2201907 and by a Benjamin Peirce Fellowship at Harvard University.
James Propp and Ben Young were supported by Simons Foundation Collaboration Grants.
Defant and Rupert Li conducted this research, in part, while visiting the University of Minnesota Duluth REU in 2022 with support from Jane Street Capital; we thank Joe Gallian for providing this wonderful opportunity. We also thank the organizers of the Open Problems in Algebraic Combinatorics 2022 conference, where much of this work was launched. Lastly, we thank Igor Pak for his helpful comments.

\bibliographystyle{amsinit}
\bibliography{ref}

\end{document}